\theoremstyle{plain}
\newtheorem{theorem}{Theorem}[section]
\newtheorem{proposition}[theorem]{Proposition}
\newtheorem{corollary}[theorem]{Corollary}
\newtheorem{lemma}[theorem]{Lemma}
\theoremstyle{definition}
\newtheorem{remark}[theorem]{Remark}
\newtheorem{example}[theorem]{Example}
\newcommand{\abs}[1]{\lvert#1\rvert}
\newcommand{\norm}[1]{\lVert#1\rVert}
\newcommand{\bignorm}[1]{\bigl\lVert#1\bigr\rVert}
\newcommand{\term}[1]{{\textit{\textbf{#1}}}}   
\renewcommand{\mid}{\::\:}
\newcommand{\goeso}{\xrightarrow{\mathrm{o}}}	
\newcommand{\goesun}{\xrightarrow{\mathrm{un}}} 
\newcommand{\goesunX}{\xrightarrow{\rm{un-}X}} 
\newcommand{\goesunZ}{\xrightarrow{\rm{un-}Z}} 
\newcommand{\goesunXd}{\xrightarrow{\rm{un-}X^\delta}} 
\newcommand{\goesunB}{\xrightarrow{\text{un-$B$}}} 
\newcommand{\goesuo}{\xrightarrow{\mathrm{uo}}}	
\newcommand{\goesnorm}{\xrightarrow{\norm{\cdot}}}	
\newcommand{\goesmu}{\xrightarrow{\mu}}	
\newcommand{\goesae}{\xrightarrow{\mathrm{a.e.}}}	
\DeclareSymbolFont{bbold}{U}{bbold}{m}{n}
\DeclareSymbolFontAlphabet{\mathbbold}{bbold}
\def\one{\mathbbold{1}}
\DeclareMathOperator{\Span}{span}
\DeclareMathOperator{\supp}{supp}
\renewcommand{\le}{\leqslant}
\renewcommand{\ge}{\geqslant}
\begin{document}

\title[Unbounded norm topology]
{Unbounded norm topology\\ beyond normed lattices}

\author{M. Kandi\'c}
\address{Faculty of Mathematics and Physics, University of Ljubljana,
  Jadranska 19, 1000 Ljubljana, Slovenia}
\email{marko.kandic@fmf.uni-lj.si}

\author{H. Li} \address{School of Mathematics, Southwest Jiaotong
  University, Chengdu, Sichuan, 610000, China.}
\email{lihuiqc168@126.com}

\author{V.G. Troitsky}
\address{Department of Mathematical and Statistical Sciences,
         University of Alberta, Edmonton, AB, T6G\,2G1, Canada.}
\email{troitsky@ualberta.ca}

\thanks{The first author acknowledges the financial support from the
  Slovenian Research Agency (research core funding No. P1-0222). The
  third author was supported by an NSERC grant.}  \keywords{Banach
  lattice, normed lattice, vector lattice, un-convergence,
  un-topology} \subjclass[2010]{Primary: 46B42. Secondary: 46A40}

\date{\today}

\begin{abstract}
  In this paper, we generalize the concept of unbounded norm (un)
  convergence: let $X$ be a normed lattice and $Y$ a vector lattice
  such that $X$ is an order dense ideal in $Y$; we say that a net
  $(y_\alpha)$ un-converges to $y$ in $Y$ with respect to $X$ if
  $\bignorm{\abs{y_\alpha-y}\wedge x}\to 0$ for every $x\in X_+$. We
  extend several known results about un-convergence and un-topology to
  this new setting. We consider the special case when $Y$ is the
  universal completion of $X$. If $Y=L_0(\mu)$, the space of all
  $\mu$-measurable functions, and $X$ is an order continuous Banach
  function space in $Y$, then the un-convergence on $Y$ agrees with
  the convergence in measure. If $X$ is atomic and order complete and
  $Y=\mathbb R^A$ then the un-convergence on $Y$ agrees with the
  coordinate-wise convergence.
\end{abstract}

\maketitle

\section{Introduction and preliminaries}

All vector lattices in this paper are assumed to be Archimedean.
A net $(x_\alpha)$ in a normed lattice $X$ is said to \term{un-converge} to
$x$ if $\abs{x_\alpha-x}\wedge u\to 0$ in norm for every $u\in
X_+$. This convergence, as well as the corresponding
\term{un-topology}, has been introduced and studied in
\cite{Troitsky:04,DOT,KMT}. In particular, if $X=L_p(\mu)$ for a
finite measure $\mu$ and $1\le p<\infty$, then un-convergence agrees
with convergence in measure. Convergence in measure naturally extends
to $L_0(\mu)$, the space of all measurable functions (as usual, we
identify functions that are equal a.e.). However, $L_0(\mu)$ is not a
normed lattice, so that the preceding definition of un-convergence does
not apply to $L_0(\mu)$. Motivated by this example, we generalize
un-convergence as follows. 

\emph{Throughout the paper, $X$ is a normed lattice and $Y$ is a
  vector lattice such that $X$ is an ideal in~$Y$.} For a net
$(y_\alpha)$ in~$Y$, we say that $y_\alpha$ \term{un-converges to}
$y\in Y$ \term{with respect to} $X$ if
$\bignorm{\abs{y_\alpha-y}\wedge x}\to 0$ for every $x\in X_+$; we
write $y_\alpha\goesunX y$. In the case when $(y_\alpha)$ and $y$ are
in~$X$, it is easy to see that $y_\alpha\goesunX y$ iff
$y_\alpha\goesun y$ in~$X$. Therefore, the new convergence is an
extension of the un-convergence on~$X$. We will write
$y_\alpha\goesun y$ instead of $y_\alpha\goesunX y$ when there is no
confusion.

In this paper, we study properties of extended un-convergence and
un-topology. We show that many properties of the original
un-convergence remain valid in this setting. We study when this
convergence is independent of the choice of~$X$. We consider several
special cases: when $Y$ is the universal completion of~$X$, when $X$
is atomic and $Y=\mathbb R^A$, and when $Y=L_0(\mu)$ and $X$ is a
Banach function space in $L_0(\mu)$.

\begin{example}\label{Lp}
  Let $X=L_p(\mu)$ where $\mu$ is a finite measure
  and $1\le p<\infty$, and $Y=L_0(\mu)$. A net in $L_0(\mu)$
  un-converges to zero with respect to $L_p(\mu)$ iff it converges to
  zero in measure. The proof is analogous to Example~23
  of~\cite{Troitsky:04}.  In particular, $L_p(\mu)$ spaces for all $p$
  in $[1,\infty)$ induce the same un-convergence on $L_0(\mu)$; namely,
  the convergence in measure.
\end{example}

Just as for the original un-convergence, we have $y_\alpha\goesun y$
in $Y$ iff $\abs{y_\alpha-y}\goesun 0$. This often allows one to
reduce general un-convergence to the un-convergence of positive nets
to zero. The proof of the following fact is similar to that of
\cite[Lemma~2.1]{DOT}. 

\begin{proposition}\label{ops}
  Un-convergence in $Y$ preserves algebraic and lattice operations. 
\end{proposition}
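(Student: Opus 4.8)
The plan is to reduce the statement to a handful of elementary vector-lattice inequalities together with the monotonicity of the norm on the positive cone of~$X$, following the pattern of \cite[Lemma~2.1]{DOT}. Throughout I would use the identity $y_\alpha\goesun y$ iff $\abs{y_\alpha-y}\goesun 0$ recorded above, together with the routine domination remark: if $0\le r_\alpha\le s_\alpha$ in~$Y$ and $s_\alpha\goesun 0$, then $r_\alpha\goesun 0$, because for $u\in X_+$ we have $r_\alpha\wedge u\le s_\alpha\wedge u$ and hence $\norm{r_\alpha\wedge u}\le\norm{s_\alpha\wedge u}\to 0$. Note also that every truncation $\abs{z}\wedge u$ with $z\in Y$ and $u\in X_+$ lies in~$X$, being dominated by $u$ while $X$ is an ideal in~$Y$; so all the norms appearing below are well defined.

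First I would dispatch the algebraic operations for nets $x_\alpha\goesun x$ and $y_\alpha\goesun y$ in~$Y$. For addition, the triangle inequality gives $\abs{(x_\alpha+y_\alpha)-(x+y)}\le\abs{x_\alpha-x}+\abs{y_\alpha-y}$, so by the domination remark it suffices to show $p_\alpha+q_\alpha\goesun 0$ whenever $p_\alpha,q_\alpha\ge 0$ with $p_\alpha\goesun 0$ and $q_\alpha\goesun 0$; this follows from the inequality $(p+q)\wedge u\le(p\wedge u)+(q\wedge u)$ (valid for $p,q,u\ge 0$, since it holds pointwise and hence in every vector lattice) upon taking norms. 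For scalar multiplication it is enough, by the additive case, to prove $\lambda y_\alpha\goesun\lambda y$; picking $n\in\mathbb N$ with $\abs\lambda\le n$ and using
\[
  \abs{\lambda y_\alpha-\lambda y}\wedge u=\abs\lambda\,\abs{y_\alpha-y}\wedge u\le n\,\abs{y_\alpha-y}\wedge nu=n\bigl(\abs{y_\alpha-y}\wedge u\bigr)
\]
gives $\norm{\abs{\lambda y_\alpha-\lambda y}\wedge u}\le n\norm{\abs{y_\alpha-y}\wedge u}\to 0$.

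For the lattice operations the key inputs are the Birkhoff-type inequalities
\[
  \bigabs{(x_\alpha\vee y_\alpha)-(x\vee y)}\le\abs{x_\alpha-x}+\abs{y_\alpha-y},\qquad
  \bigabs{(x_\alpha\wedge y_\alpha)-(x\wedge y)}\le\abs{x_\alpha-x}+\abs{y_\alpha-y}
\]
(again these hold pointwise, hence in any vector lattice), which, combined with the additive case and the domination remark, immediately yield $x_\alpha\vee y_\alpha\goesun x\vee y$ and $x_\alpha\wedge y_\alpha\goesun x\wedge y$. The remaining operations reduce to these: $\bigabs{\abs{x_\alpha}-\abs{x}}\le\abs{x_\alpha-x}$ handles $\abs{\cdot}$, while $x^+=x\vee 0$ and $x^-=(-x)\vee 0$ give $x_\alpha^+\goesun x^+$ and $x_\alpha^-\goesun x^-$ from the $\vee$-case together with invariance under negation.

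I do not anticipate a genuine obstacle, as the argument parallels \cite[Lemma~2.1]{DOT}; the only point needing attention is bookkeeping — checking that each truncated element stays inside~$X$ so that the norm of~$X$ applies, and that the norm is used only in its monotone regime on $X_+$, both of which are guaranteed by $X$ being an ideal in~$Y$ and a normed lattice.
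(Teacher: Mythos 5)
Your proof is correct and follows essentially the same route as the paper, which gives no details and simply refers to \cite[Lemma~2.1]{DOT}; your argument is exactly the standard one from that lemma (reduction to positive null nets via domination, the inequality $(p+q)\wedge u\le p\wedge u+q\wedge u$, and the Birkhoff inequalities), with the only new ingredient being the observation that each truncation $\abs{z}\wedge u$ lies in $X$ because $X$ is an ideal in $Y$ --- which you correctly note.
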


In Section~7 of~\cite{DOT}, it was observed that in the case when
$X=Y$ the un-convergence on a normed lattice is given by a topology,
and a base of zero neighbourhoods for that topology was given. We will
now proceed analogously. Given a positive real number $\varepsilon$
and a positive vector $x\in X$, define
\begin{displaymath}
  U_{\varepsilon,x}
  =\Bigl\{y\in Y\mid \bignorm{\abs{y}\wedge x}<\varepsilon\Bigr\}.
\end{displaymath}
Similarly to  Section~7 of~\cite{DOT}, one may verify that
\begin{itemize}
  \item zero is contained in  $U_{\varepsilon,x}$ for all
    $\varepsilon>0$ and $x\in X_+$;
  \item For every $\varepsilon_1,\varepsilon_2>0$ and every
    $x_1,x_2\in X_+$ there exists $\varepsilon>0$ and $x\in X_+$ such
    that $U_{\varepsilon,x}\subseteq U_{\varepsilon_1,x_1}\cap
    U_{\varepsilon_2,x_2}$;
  \item Given $y\in U_{\varepsilon,x}$ for some $y\in Y$, $x\in X_+$,
    and $\varepsilon>0$, we have $y+U_{\delta,x}\subseteq
    U_{\varepsilon,x}$ for some $\delta>0$.
\end{itemize}
For every $y\in Y$, define the family $\mathcal N_y$ of subsets of $Y$
as follows: $W\in\mathcal N_y$ if $y+U_{\varepsilon,x}\subseteq W$ for some
$\varepsilon>0$ and $x\in X_+$. It follows from, e.g., Theorem~3.1.10
of \cite{Runde:05} that there is a unique topology on $Y$ such that
$\mathcal N_y$ is exactly the set of all neighbourhoods of $y$ for
every $y\in Y$. It is also easy to see that $y_\alpha\goesun y$ in $Y$
iff for every $\varepsilon>0$ and every $x\in X_+$, the set
$U_{\varepsilon,x}$ contains a tail of the net $(y_\alpha-y)$; it
follows that the un-convergence on $Y$ with respect to $X$ is exactly
the convergence with respect to this topology. We call it the
\term{un-topology on $Y$ induced by $X$}.

There are, however, two important differences with~\cite{DOT}.
First, unlike in~\cite{DOT}, this topology need not be Hausdorff. 

\begin{example}\label{Lp-band}
  Let $Y=L_p(\mu)$, where $\mu$ is a finite measure and
  $1\le p<\infty$; let $X$ be a band in~$Y$, i.e., $X=L_p(A,\mu)$,
  where $A$ is a measurable set. Consider the un-convergence on $Y$
  with respect to~$X$. In this case, for every net $(y_\alpha)$
  in the disjoint complement $X^d$ of $X$ in $Y$ and every $y\in X^d$
  we have $y_\alpha\goesunX y$. This shows that un-limits need not be
  unique, so that un-topology need not be Hausdorff. Note, also, that
  the un-convergence on $Y$ induced by $X$ is different from the
  ``native'' un-convergence of~$Y$.
\end{example}

Recall that a sublattice $F$ of a vector lattice $E$ is
\begin{itemize}
\item \term{order dense} if for every non-zero $x\in E_+$ there exists
  $y\in F$ such that $0<y\le x$; and
\item \term{majorizing} if for every $x\in E_+$ there exists $y\in F$
  with $x\le y$.
\end{itemize}
An ideal $F$ of $E$ is order dense iff $u=\sup\bigl\{u\wedge v\mid
v\in F_+\bigr\}$ for every $u\in E_+$; see
\cite[Theorem~1.27]{Aliprantis:03}. 

\begin{proposition}\label{Hausdorff}
  The un-topology on $Y$ induced by $X$ is Hausdorff iff $X$ is order
  dense in~$Y$.
\end{proposition}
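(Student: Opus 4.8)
The plan is to prove the two implications separately by elementary Riesz-space computations. The only external input is the easy fact that, since $X$ is an \emph{ideal} in $Y$, $X$ is order dense in $Y$ if and only if its disjoint complement $X^d$ in $Y$ equals $\{0\}$; this follows directly from the definition of order density together with the observation that for $u\in Y_+$ and $x\in X_+$ one has $u\wedge x\in X$ and $0\le u\wedge x\le u$, so $u\wedge x$ is either $0$ or a witness to order density for~$u$ (when $u\wedge x=u$ it shows $u\in X$).

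For the forward implication I would argue contrapositively: suppose $X$ is not order dense in $Y$ and pick $z\in X^d$ with $z>0$. Then $\abs z\wedge x=z\wedge x=0$, hence $\bignorm{\abs z\wedge x}=0<\varepsilon$, for every $\varepsilon>0$ and $x\in X_+$, so $z$ belongs to every basic neighbourhood $U_{\varepsilon,x}$ of~$0$. Therefore the constant net equal to $z$ un-converges (with respect to $X$) both to $0$ and to $z$; since a net in a Hausdorff space has at most one limit — two tails of a net always meet — the un-topology on $Y$ induced by $X$ is not Hausdorff. (This is precisely the mechanism behind Example~\ref{Lp-band}.)

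For the reverse implication, assume $X$ is order dense in $Y$. First I would show that $\bigcap\{U_{\varepsilon,x}\mid\varepsilon>0,\ x\in X_+\}=\{0\}$: if $y$ lies in this intersection then $\bignorm{\abs y\wedge x}=0$, hence $\abs y\wedge x=0$, for all $x\in X_+$ (again $\abs y\wedge x\in X$ since $0\le\abs y\wedge x\le x$), and order density forces $\abs y=\sup\{\abs y\wedge x\mid x\in X_+\}=0$. Then I would promote this to Hausdorff separation: given $y\neq z$, put $w=y-z\neq 0$; by the previous step there are $\varepsilon>0$ and $x\in X_+$ with $\bignorm{\abs w\wedge x}\ge 2\varepsilon$, and I claim $y+U_{\varepsilon,x}$ and $z+U_{\varepsilon,x}$ are disjoint. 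Indeed, if some $v$ lay in both, then $v-y,v-z\in U_{\varepsilon,x}$, and since $w=(v-z)-(v-y)$ the chain of inequalities
\begin{displaymath}
  \abs w\wedge x\le\bigl(\abs{v-z}+\abs{v-y}\bigr)\wedge x\le\abs{v-z}\wedge x+\abs{v-y}\wedge x ,
\end{displaymath}
combined with monotonicity and subadditivity of the lattice norm, would give $\bignorm{\abs w\wedge x}<\varepsilon+\varepsilon=2\varepsilon$, a contradiction. Since the sets $y+U_{\varepsilon,x}$ form a neighbourhood base at~$y$ (as recalled before the statement), this yields the Hausdorff property.

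I expect the only slightly delicate point to be the subadditivity of the map $t\mapsto t\wedge x$ on $Y_+$ used in the last display, which is standard (it follows from the non-expansiveness $\bigabs{t\wedge x-s\wedge x}\le\abs{t-s}$, so that $(p+q)\wedge x-q\wedge x\le p$ and trivially $\le x$, hence $\le p\wedge x$). The genuinely conceptual step is the forward implication: one must notice that the failure of order density produces a nonzero vector that is ``un-invisible'' from~$0$, so that the constant net at that vector has two distinct un-limits; the converse is then the short self-contained separation argument above, driven entirely by the intersection computation $\bigcap U_{\varepsilon,x}=\{0\}$.
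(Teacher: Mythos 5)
Your proof is correct and follows essentially the same route as the paper: both directions reduce to the observation that a nonzero $y$ lies in every $U_{\varepsilon,x}$ precisely when $\abs{y}\wedge x=0$ for all $x\in X_+$, which for an ideal is exactly the failure of order density. The only real difference is that you spell out the passage from $\bigcap_{\varepsilon,x} U_{\varepsilon,x}=\{0\}$ to the separation of arbitrary pairs of points (via subadditivity of $t\mapsto t\wedge x$ and the triangle inequality), a step the paper leaves implicit by relying on translation invariance and the continuity of addition.
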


\begin{proof}
  Suppose that the topology is Hausdorff and let $0<y\in Y$. Then $y\notin
  U_{\varepsilon,x}$ for some $\varepsilon>0$ and some $x\in X_+$. It
  follows that $y\wedge x\ne 0$. Note that $y\wedge x\in
  X$ and $y\wedge x\le y$. Therefore, $X$ is order dense.

  Conversely, suppose that $X$ is order dense and take $0\ne y\in
  Y$. Find $x\in X$ with $0<x\le\abs{y}$. Then $y\notin
  U_{\varepsilon,x}$ where $\varepsilon=\norm{x}$.
\end{proof}

The second important difference between our setting and that
of~\cite{DOT} is linearity. It is shown in~\cite{DOT} that the base
zero neighbourhoods $V_{u,\varepsilon}$ are absorbing; then
Theorem~5.1 of~\cite{Kelley:76} is used to conclude that the resulting
topology is linear. In our setting, however, the sets
$U_{\varepsilon,x}$ need not be absorbing and the topology need not be
linear.

\begin{example}
  Let $X=\ell_\infty$ and $Y=\mathbb R^{\mathbb N}$. Put $x=\one$, the
  constant one sequence, $\varepsilon=1$, and $z=(1,2,3,\dots)$. Then
  $U_{\varepsilon,x}=\{y\in Y\mid \sup\abs{y_i}<1\}$. It is easy to
  see that no scalar multiple of $z$ is in $U_{\varepsilon,x}$, hence
  $U_{\varepsilon,x}$ is not absorbing. It also follows that the
  sequence $\frac1n z$ does not un-converge to zero as
  $n\to\infty$. This shows that the un-topology on $Y$ induced by $X$
  is not linear.
\end{example}

However, it is easy to see that the un-topology on $Y$ induced by $X$
is translation invariant. Moreover, addition is jointly continuous by
Proposition~\ref{ops}, so the problem is only with the continuity of
scalar multiplication. It is easy to see that the un-topology on $Y$
generated by $X$ is linear when $X$ is order continuous or when $Y$ is
a normed lattice and $\norm{\cdot}_X$ and $\norm{\cdot}_Y$ agree on
$X$. Note also that, in general, the restriction of this topology to
$X$ agrees with the ``native'' un-topology of $X$, which is Hausdorff
and linear.

\bigskip

We finish the introduction with the following two easy facts that will
be used throughout the rest of the paper. The following is an analogue
of \cite[Lemma~1.2]{KMT}.

\begin{proposition}\label{un-monot}
  Suppose that $X$ is order dense in~$Y$. If $y_\alpha\uparrow$ and
  $y_\alpha\goesun y$ in $Y$ then $y_\alpha\uparrow y$.
\end{proposition}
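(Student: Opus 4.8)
The plan is to establish $y=\sup_\alpha y_\alpha$ by checking separately that $y$ is an upper bound of $(y_\alpha)$ and that it is the least upper bound. The workhorse in both steps is the following reduction, which is where the order density hypothesis enters: since $X$ is an order dense ideal in $Y$, \cite[Theorem~1.27]{Aliprantis:03} gives $w=\sup\bigl\{w\wedge x\mid x\in X_+\bigr\}$ for every $w\in Y_+$, so to prove $w=0$ it suffices to prove $w\wedge x=0$ for every $x\in X_+$; and since $X$ is an ideal we have $w\wedge x\in X$, so that (the lattice norm being nondegenerate) $\norm{w\wedge x}=0$ already forces $w\wedge x=0$. In other words, it is enough to drive $\norm{w\wedge x}$ to $0$ for each $x\in X_+$.

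First I would show $y_\alpha\le y$ for each fixed $\alpha$; equivalently $(y_\alpha-y)_+=0$. Fix $x\in X_+$. For every $\beta\ge\alpha$, monotonicity gives $y_\alpha-y\le y_\beta-y$, hence $(y_\alpha-y)_+\le\abs{y_\beta-y}$ and therefore $\norm{(y_\alpha-y)_+\wedge x}\le\bignorm{\abs{y_\beta-y}\wedge x}$. Since $y_\beta\goesun y$ and the tail $\{\beta\ge\alpha\}$ is cofinal, the right-hand side tends to $0$, so the (constant) left-hand side must be $0$; thus $(y_\alpha-y)_+\wedge x=0$. As $x\in X_+$ was arbitrary, the reduction yields $(y_\alpha-y)_+=0$, i.e. $y_\alpha\le y$.

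Next I would show that $y$ is the least upper bound. Let $z\in Y$ satisfy $y_\alpha\le z$ for all $\alpha$; I claim $(y-z)_+=0$. Since we now know $y_\alpha\le y$, we have $0\le y-y_\alpha=\abs{y-y_\alpha}$, so $y_\alpha\goesun y$ says exactly that $\norm{(y-y_\alpha)\wedge x}\to 0$ for every $x\in X_+$. From $y_\alpha\le z$ we get $y-z\le y-y_\alpha$, and since $y-y_\alpha\ge 0$ this gives $(y-z)_+\le y-y_\alpha$, whence $\norm{(y-z)_+\wedge x}\le\norm{(y-y_\alpha)\wedge x}\to 0$. So $(y-z)_+\wedge x=0$ for every $x\in X_+$, hence $(y-z)_+=0$ and $y\le z$. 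Combining the two parts gives $y_\alpha\uparrow y$. I do not expect a genuine obstacle here: the only real content is the opening reduction — order density together with the ideal property is precisely what lets one test "$=0$" against $X_+$ and then erase the norm — after which everything is a routine rearrangement of inequalities, mirroring \cite[Lemma~1.2]{KMT} (where $X=Y$ makes order density automatic). The one point to watch is using the correct direction of the net's monotonicity in each of the two steps.
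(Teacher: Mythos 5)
Your proof is correct, and it takes a genuinely different route from the paper's. The paper first reduces (via a tail shift) to the case $y_\alpha\ge 0$, $y\ge 0$, then shows $y_\alpha\wedge x\uparrow y\wedge x$ in $X$ for each $x\in X_+$ (an increasing norm-convergent net increases to its limit), and finally obtains $y=\sup_\alpha y_\alpha$ by interchanging the two suprema in $\sup_{x}\sup_\alpha y_\alpha\wedge x$, invoking order density exactly once through the identity $y=\sup_{x\in X_+}y\wedge x$. You instead verify the two defining properties of the supremum directly: $y$ is an upper bound because $(y_\alpha-y)_+\wedge x$ is dominated by the tail of $\abs{y_\beta-y}\wedge x$, and it is the least upper bound because $(y-z)_+\le y-y_\alpha$ for any competing upper bound $z$; in both steps order density enters through the same test principle (a positive $w\in Y$ with $w\wedge x=0$ for all $x\in X_+$ is zero). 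The estimates you use --- $(y_\alpha-y)_+\le\abs{y_\beta-y}$ for $\beta\ge\alpha$, $(y-z)_+\le y-y_\alpha$, and the monotonicity of the lattice norm on the ideal $X$ --- are all valid, and your argument avoids both the positivity reduction and the interchange of suprema. What the paper's version buys is brevity and a reusable intermediate fact ($y_\alpha\wedge x\uparrow y\wedge x$); what yours buys is a more elementary, self-contained verification that makes the role of order density completely explicit. Either proof is acceptable.
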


\begin{proof}
  Without loss of generality, $y_\alpha\ge 0$ for each $\alpha$;
  otherwise, pass to a tail $(y_\alpha)_{\alpha\ge\alpha_0}$ and
  consider the net
  $(y_\alpha-y_{\alpha_0})_{\alpha\ge\alpha_0}$. Therefore, we may
  assume that $y\ge 0$ by Proposition~\ref{ops}. We claim that
  $y_\alpha\wedge x\uparrow y\wedge x$ for every $x\in X_+$. Indeed,
  it follows from $\abs{y_\alpha-y}\wedge x\goesnorm 0$ that
  $y_\alpha\wedge x\goesnorm y\wedge x$. Since the net
  $(y_\alpha\wedge x)$ is increasing, it follows that
  $y_\alpha\wedge x\uparrow y\wedge x$. This proves the claim.

  Since $X$ is order dense in~$Y$, we have
  \begin{displaymath}
    y=\sup\limits_{x\in X_+}y\wedge x
    =\sup\limits_{x\in X_+}\sup\limits_\alpha y_\alpha\wedge x
    =\sup\limits_\alpha\sup\limits_{x\in X_+} y_\alpha\wedge x
    =\sup\limits_\alpha y_\alpha.
  \end{displaymath}
\end{proof}

For a vector lattice~$E$, we write $E^\delta$ for the order (Dedekind)
completion of $E$. Recall that $E$ is order dense and majorizing
in~$E^\delta$; moreover, these properties
characterize~$E^\delta$. Suppose that $F$ is an ideal in $E$.  Let $Z$
be the ideal generated by $F$ in~$E^\delta$. It is easy to see that
$F$ is order dense and majorizing in~$Z$; it follows that we may
identify $Z$ with~$F^\delta$. Therefore, if $F$ is an ideal of $E$
then $F^\delta$ may be viewed as an ideal in~$E^\delta$.

In particular, we view $X^\delta$ as an ideal in~$Y^\delta$. The norm
on $X$ admits an extension to a lattice norm on $X^\delta$; see, e.g.,
\cite[p. 179]{Vulikh:67} or \cite[p. 26]{Abramovich:02}. Note that
such an extension need not be unique; see \cite {Solovev:66}. The following
proposition is valid for any such extension; the proof is
straightforward.

\begin{proposition}\label{deltas}
  Let  $X$ be a normed lattice which is an ideal in a vector
  lattice~$Y$, and $(y_\alpha)$ a net in~$Y$. Then $y_\alpha\goesunX 0$
  in $Y$ iff $y_\alpha\goesunXd 0$ in~$Y^\delta$.
\end{proposition}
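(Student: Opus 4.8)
The plan is to verify the two implications directly, using the identifications fixed above: $Y$ is a sublattice of $Y^\delta$, so for $y\in Y$ and $x\in Y_+$ the meet $\abs{y}\wedge x$ is the same element in $Y$ and in $Y^\delta$; and since $X^\delta$ is an ideal of $Y^\delta$, for $z\in(X^\delta)_+$ the meet $\abs{y}\wedge z$ taken in $Y^\delta$ lies in $X^\delta$. The only properties of the (non-unique) norm extension I would use are that it restricts to $\norm{\cdot}_X$ on $X$ and that, being a lattice norm, it is monotone on $(X^\delta)_+$.

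The backward implication is immediate. Assuming $y_\alpha\goesunXd 0$ in $Y^\delta$, I would fix an arbitrary $x\in X_+\subseteq(X^\delta)_+$; then $\bignorm{\abs{y_\alpha}\wedge x}_{X^\delta}\to 0$. Since $X$ is an ideal in $Y$ and $0\le\abs{y_\alpha}\wedge x\le x$, we have $\abs{y_\alpha}\wedge x\in X$, so its $X^\delta$-norm equals its $X$-norm; hence $\bignorm{\abs{y_\alpha}\wedge x}_X\to 0$, and as $x$ was arbitrary, $y_\alpha\goesunX 0$ in $Y$.

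For the forward implication I would exploit that $X$ is \emph{majorizing} in $X^\delta$, not merely order dense. Assuming $y_\alpha\goesunX 0$ in $Y$, fix $w\in(X^\delta)_+$ and choose $x\in X_+$ with $w\le x$. Then $0\le\abs{y_\alpha}\wedge w\le\abs{y_\alpha}\wedge x\in X$, so monotonicity of the lattice norm on $X^\delta$ gives
\[
  \bignorm{\abs{y_\alpha}\wedge w}_{X^\delta}
  \le\bignorm{\abs{y_\alpha}\wedge x}_{X^\delta}
  =\bignorm{\abs{y_\alpha}\wedge x}_X\to 0 .
\]
Since $w\in(X^\delta)_+$ was arbitrary, $y_\alpha\goesunXd 0$ in $Y^\delta$.

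I do not expect a real obstacle: the argument is essentially bookkeeping about the four spaces $X,Y,X^\delta,Y^\delta$. The one thing to keep straight is that the whole proof rests on only the two listed properties of the extension, which is precisely why the conclusion is independent of the choice of extension; and that the structural input making the forward direction work is the majorizing (not just order density) of $X$ in $X^\delta$, since that is what lets a single $x\in X_+$ dominate a prescribed $w\in(X^\delta)_+$.
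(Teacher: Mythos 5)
Your argument is correct and is precisely the ``straightforward'' proof the paper omits: the backward direction uses that $X$ is an ideal in $Y$ together with the fact that the extended norm restricts to $\norm{\cdot}_X$ on $X$, and the forward direction uses that $X$ is majorizing in $X^\delta$ plus monotonicity of the extended lattice norm. Your closing observation that only these two properties of the extension are used is exactly why the paper can assert the proposition for any choice of norm extension.
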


\section{Uniqueness of un-topology}

Example~\ref{Lp-band} shows that un-convergence on $Y$ may depend on
the choice of~$X$. Here is another example of the same phenomenon.

\begin {example}
  Let $Y=C[0,1]$ equipped with the supremum norm and let $X$ be the
  subspace of $Y$ consisting of all the functions which vanish
  at~$0$. Then $X$ is an order dense ideal in $Y$ which is not norm
  dense. Let $f_n\in Y$ be such that $\norm{f_n}=1$ and
  \begin{math}
    \supp f_n=\bigl[\frac{1}{n+1},\frac{1}{n}\bigr].
  \end{math}
  Then $(f_n)$ un-converges to zero in $Y$ with respect to~$X$, but fails
  to un-converge in $Y$ (with respect to $Y$). Therefore, the
  un-toplogy in $Y$ induced by $X$ does not agree with the
  ``native'' un-toplogy in~$Y$.
\end {example}

In the rest of this section, we consider situations when different
normed ideals of $Y$ induce the same un-topology on~$Y$.

\begin{proposition}\label{dense}
  Let $Z$ be a norm dense ideal in~$X$. Then $Z$ and $X$ induce the
  same un-topology on~$Y$.
\end{proposition}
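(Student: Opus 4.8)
The plan is to prove the equivalent statement that a net $(y_\alpha)$ in $Y$ satisfies $y_\alpha\goesunZ 0$ if and only if $y_\alpha\goesunX 0$. Since un-convergence to an arbitrary $y$ reduces to un-convergence of $\abs{y_\alpha-y}$ to $0$, and since (as recorded above) the un-topology induced by a normed ideal is precisely the topology of its un-convergence, this equivalence forces the two un-topologies on $Y$ to coincide. I would begin by noting the two structural facts that make the statement meaningful: $Z$, being an ideal of $X$ and hence of $Y$, carries the norm of $X$ restricted to $Z$, so $\goesunZ$ makes sense on $Y$; and $\abs{y_\alpha}\wedge z\in Z$ for every $z\in Z_+$, because $0\le\abs{y_\alpha}\wedge z\le z$.

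The implication $\goesunX\Rightarrow\goesunZ$ is immediate and uses only $Z\subseteq X$: if $y_\alpha\goesunX 0$ then, since $Z_+\subseteq X_+$ and $\norm{\cdot}_Z$ agrees with $\norm{\cdot}_X$ on $Z$, we have $\norm{\abs{y_\alpha}\wedge z}\to 0$ for each $z\in Z_+$, i.e.\ $y_\alpha\goesunZ 0$. For the converse I would fix $x\in X_+$ and $\varepsilon>0$, use norm density of $Z$ in $X$ to choose $z_0\in Z$ with $\norm{x-z_0}<\varepsilon$, and then reduce to the case $0\le z\le x$ by passing to $z:=\abs{z_0}\wedge x\in Z_+$. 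Here $0\le z\le x$, and $x-z=(x-\abs{z_0})^+$, so monotonicity of the norm together with $\bigl|\abs{x}-\abs{z_0}\bigr|\le\abs{x-z_0}$ gives $\norm{x-z}\le\norm{x-\abs{z_0}}\le\norm{x-z_0}<\varepsilon$; thus the reduction does not increase the error.

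The key step is the elementary Riesz-space inequality
\begin{displaymath}
  \abs{y_\alpha}\wedge x\le\bigl(\abs{y_\alpha}\wedge z\bigr)+(x-z),
\end{displaymath}
valid whenever $0\le z\le x$ (it is the instance $u\wedge c\le(u\wedge b)+(c-b)$, $0\le b\le c$, which one checks by splitting on whether $u\le b$). Taking norms and using $x-z\ge 0$ with $\norm{x-z}<\varepsilon$ yields $\norm{\abs{y_\alpha}\wedge x}\le\norm{\abs{y_\alpha}\wedge z}+\varepsilon$. Since $z\in Z_+$ and $y_\alpha\goesunZ 0$, the term $\norm{\abs{y_\alpha}\wedge z}$ is eventually below $\varepsilon$, so $\norm{\abs{y_\alpha}\wedge x}<2\varepsilon$ on a tail of the net; as $x\in X_+$ and $\varepsilon>0$ were arbitrary, $y_\alpha\goesunX 0$.

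There is no genuine obstacle here; the only points requiring a little care are the reduction to $0\le z\le x$ (so that the comparison inequality applies and the error term $x-z$ is positive and norm-small) and the verification of the lattice inequality, both of which are routine computations in an Archimedean vector lattice.
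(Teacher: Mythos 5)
Your proof is correct and follows essentially the same route as the paper: approximate $x\in X_+$ in norm by some $z\in Z_+$ and use a Birkhoff-type lattice inequality to get $\norm{\abs{y_\alpha}\wedge x}\le\norm{\abs{y_\alpha}\wedge z}+\norm{x-z}<2\varepsilon$ on a tail. Your extra reduction to $0\le z\le x$ is a harmless refinement of the same argument.
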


\begin{proof}
  It suffices to show that $y_\alpha\goesunX 0$ iff
  $y_\alpha\goesunZ 0$ for every net $(y_\alpha)$ in $Y_+$. It is
  clear that $y_\alpha\goesunX 0$ implies $y_\alpha\goesunZ 0$. To
  prove the converse, suppose that $y_\alpha\goesunZ 0$; fix
  $x\in X_+$ and $\varepsilon>0$. Find $z\in Z_+$ such that
  $\norm{x-z}<\varepsilon$. By assumption, $y_\alpha\wedge z\to
  0$. This implies that there exists $\alpha_0$ such that
  $\norm{y_\alpha\wedge z}<\varepsilon$ whenever
  $\alpha\ge\alpha_0$. It follows that
  \begin{displaymath}
    \norm{y_\alpha\wedge x}
    \le\norm{y_\alpha\wedge z}+\norm{x-z}
    <2\varepsilon,
  \end{displaymath}
  so that $y_\alpha\goesunX 0$.
\end{proof}

\begin{example}
  Let $X=c_0$ and $Y=\ell_\infty$. Since $Y$ has a strong unit, the
  ``native'' un-topology on $Y$ agrees with its norm topology. We
  claim that the un-convergence induced on $Y$ by $X$ is the
  coordinate-wise convergence. Indeed, if $y_\alpha\goesunX 0$ in $Y$
  then $\abs{y_\alpha}\wedge e_i\to 0$ for every~$i$, where $e_i$ is
  the $i$-th unit vector in $c_0$; it follows that $y_\alpha$
  converges to zero coordinate-wise. Conversely, if $y_\alpha$
  converges to zero coordinate-wise in $Y$ then $\abs{y_\alpha}\wedge
  x\to 0$ for every $x\in c_{00}$, so that
  \begin{math}
    y_\alpha\xrightarrow{\text{un-$c_{00}$}}0.
  \end{math}
  Proposition~\ref{dense} now yields that
  \begin{math}
    y_\alpha\xrightarrow{\text{un-$c_{0}$}}0.
  \end{math}
\end{example}

In Proposition~\ref{dense}, the norm on $Z$ was the restriction of the
norm of~$X$. We would now like to consider situations where $X$ and
$Z$ have different norms, e.g., $X=L_p(\mu)$ and $Z=L_q(\mu)$, where
$\mu$ is a finite measure and $1\le p\le q<\infty$. We need the
following lemma, which is a variant of Amemiya's Theorem (see, e.g.,
Theorem~2.4.8 in~\cite{Meyer-Nieberg:91}). We provide a proof for
completeness.

\begin{lemma}\label{Amemiya}
  Let $X$ be a Banach lattice and $Z$ an order continuous normed
  lattice such that $Z$ continuously embeds into $X$ as an
  ideal. Then the norm topologies of $X$ and $Z$ agree on order
  intervals of~$Z$.
\end{lemma}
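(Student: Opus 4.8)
The plan is to use the classical Amemiya-type argument by contradiction. Suppose the two norms do not agree on the order interval $[0,w]$ for some $w\in Z_+$; since the $Z$-norm dominates the $X$-norm (up to a constant) by the continuity of the embedding, the only way they can fail to be equivalent on $[0,w]$ is that there is a sequence $(z_n)$ in $[0,w]$ with $\norm{z_n}_X\to 0$ but $\norm{z_n}_Z\ge 1$ for all $n$. By passing to a subsequence, I would arrange that $\norm{z_n}_X< 2^{-n}$, so that the series $\sum_n z_n$ is absolutely convergent in the Banach lattice $X$; call its sum $z$. Then $z\in X_+$, and since $z\ge z_n$ for each $n$ and $z_n\le w\in Z$, one checks that in fact the partial sums $s_k=\sum_{n\le k}z_n$ form an increasing sequence in $Z$ bounded above in $X$ by $z$. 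The key point will be to promote this to an order bound inside $Z$: because $Z$ is an ideal in $X$ and $z_n\le w$, the element $z$ actually lies in $Z$ — indeed $s_k\le 2w$ is false in general, so instead I use that $s_k\uparrow z$ in $X$ and $0\le s_k$, together with the fact that an ideal is closed under the relevant domination, arguing that $z$ belongs to the ideal $Z$ since $z\le$ (something in $Z$). Let me restructure: the cleanest route is to observe $z_n\le w$ gives $s_k = \sum_{n\le k} z_n$, but this need not be $w$-bounded, so instead I replace $z_n$ by $z_n\wedge w_n$-type truncations; more simply, scale so that $\sum \norm{z_n}_X$ converges AND each $z_n\le w$, then $z:=\sup_k s_k$ exists in $X$ (as the norm limit of an increasing net in a Banach lattice, using that $X$-order-convergence of monotone norm-Cauchy nets holds), and $z\in Z$ because $Z$ is an ideal and... here is the actual trick: we do not need $z\in Z$; we only need to contradict order continuity of $Z$.

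So the real argument: arrange $\norm{z_n}_X\to 0$, $\norm{z_n}_Z=1$, and (after extracting a subsequence and relabelling) $\sum_n z_n$ converges in $X$ to some $z\in X_+$. Now consider the tails $u_k=\sum_{n\ge k} z_n$, which decrease to $0$ in $X$-norm. Since $0\le z_n\le w$, we also have $0\le z_n\le u_k\le z$ for $n\ge k$... wait, $u_k$ need not lie in $Z$. To fix this, I would from the start work with $v_n := z_n\wedge w$-nothing; cleaner: since $z_n\in[0,w]\subseteq Z$ and $Z$ is an ideal in $X$, every element of $X$ lying below an element of $Z$ is in $Z$; in particular each partial sum $s_k\in Z$ automatically, and if $\sum\norm{z_n}_X<\infty$ then $z=\sup s_k\in X$, but to get $z\in Z$ I additionally need $z$ dominated by a $Z$-element. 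This is exactly why one must be careful — and this domination is the main obstacle. The standard fix: do NOT sum all the $z_n$; instead, after extracting, pass to a further subsequence so that the $z_n$ are "spread out," i.e. use that one can find disjoint-like pieces, OR invoke the Banach lattice completeness differently.

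Given the above, here is the proof I would actually write, cleanly. Assume, for contradiction, that the norms disagree on $[0,w]$, $w\in Z_+$. Then there is $(z_n)\subseteq[0,w]$ with $\norm{z_n}_X\to 0$ and $\inf_n\norm{z_n}_Z>0$; normalise so $\norm{z_n}_Z\ge\varepsilon_0>0$. Replacing $(z_n)$ by a subsequence, assume $\sum_n\norm{z_n}_X<\infty$. Let $a_n=\sum_{k\ge n} z_k$, which exists in $X$ since $X$ is a Banach lattice and the series is absolutely norm-convergent; clearly $a_n\downarrow$ in $X$ with $\norm{a_n}_X\to 0$, hence $a_n\downarrow 0$ in $X$ (a decreasing norm-null net in a Banach lattice order-converges to $0$, using the Riesz decomposition / order completeness of order intervals). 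Now $0\le z_n\le a_n$ and $0\le a_n\le a_1$ where $a_1=\sum_k z_k$; since $z_n\le w\in Z$ we have $z_n\in Z$, but more importantly $a_1=\sup_N\sum_{k\le N}z_k$; hmm, $a_1$ may not be in $Z$. So instead I truncate: note $a_n\wedge w\in Z$ for every $n$ (ideal), and $a_n\wedge w\downarrow$ with $\norm{a_n\wedge w}_X\le\norm{a_n}_X\to 0$, so $a_n\wedge w\downarrow 0$ in $X$, hence also in $Z$ — wait, order convergence in $X$ of a net living in $Z$ need not be order convergence in $Z$. But $Z$ is an ideal in $X$: if $b_n\downarrow$ in $Z$ and $b_n\downarrow 0$ in $X$, then since $0$ is the infimum in the larger space and $Z$ is an ideal hence sublattice containing all lower bounds... a lower bound of $(b_n)$ in $Z$ is a lower bound in $X$, so $\le 0$; thus $b_n\downarrow 0$ in $Z$. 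Therefore $b_n:=a_n\wedge w\downarrow 0$ in $Z$, and order continuity of $Z$ gives $\norm{b_n}_Z\to 0$. But $z_n\le w$ and $z_n\le a_n$, so $z_n\le a_n\wedge w=b_n$, whence $\norm{z_n}_Z\le\norm{b_n}_Z\to 0$, contradicting $\norm{z_n}_Z\ge\varepsilon_0$. This completes the argument; the one genuinely delicate point, which I flagged, is confirming that a decreasing net that is norm-null in the Banach lattice $X$ and lies in the ideal $Z$ in fact decreases to $0$ order-wise in $Z$, which is where both the Banach-lattice completeness of $X$ and the ideal property of $Z$ in $X$ get used.
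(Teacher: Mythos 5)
Your final, cleaned-up argument is correct and follows essentially the same route as the paper's proof: extract a subsequence with summable $X$-norms, dominate $z_n$ by the truncated tails $a_n\wedge w\in Z$, which decrease to zero in~$Z$, and invoke order continuity of~$Z$. The paper compresses exactly this into the statement that a further subsequence is order-null in $X$ and hence, since it lies in $[-u,u]$ and $Z$ is an ideal, order-null in~$Z$; your write-up merely makes that standard extraction and the domination by an element of $Z$ explicit.
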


\begin{proof}
  Fix $u\in Z_+$ and let $(z_n)$ be a sequence in $[-u,u]$. Since the
  embedding of $Z$ into $X$ is continuous, if $z_n\to z$ in $Z$ then
  $z_n\to z$ in~$X$. Suppose now that $z_n\to z$ in~$X$. Then
  $z\in[-u,u]$, hence $z\in Z$. Without loss
  of generality, $z=0$. For the sake of contradiction, suppose that
  $(z_n)$ does not converge to zero in~$Z$. Passing to a subsequence,
  we can find $\varepsilon>0$ such that $\norm{z_n}_Z>\varepsilon$ for
  every~$n$. Since $\norm{z_n}_X\to 0$, passing to a further
  subsequence we may assume that $z_n\goeso 0$ in~$X$. Since
  $z_n\in[-u,u]$ and $Z$ is an ideal in~$X$, we conclude that
  $z_n\goeso 0$ in~$Z$. Since $Z$ is order continuous, this yields
  $\norm{z_n}_Z\to 0$; a contradiction.
\end{proof}

\begin{theorem}\label{nested}
  Suppose that $X$ is a Banach lattice and $Z$ is an order continuous
  normed lattice such that $Z$ continuously embeds into $X$ as a norm
  dense ideal. Then $X$ and $Z$ induce the same un-topology on~$Y$.
\end{theorem}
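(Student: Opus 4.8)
The plan is to reduce, exactly as in the proof of Proposition~\ref{dense}, to showing that for every net $(y_\alpha)$ in $Y_+$ we have $y_\alpha\goesunX 0$ if and only if $y_\alpha\goesunZ 0$: the un-topologies on $Y$ are translation invariant, $y_\alpha\goesun y$ iff $\abs{y_\alpha-y}\goesun 0$ by Proposition~\ref{ops}, and a topology is determined by its convergent nets, so this equivalence forces the two un-topologies to coincide. Before doing so I would record two preliminary remarks. Since $Z$ is an ideal in $X$ and $X$ is an ideal in $Y$, $Z$ is an ideal in $Y$, so un-convergence with respect to $Z$ makes sense on $Y$. Moreover, if $y\in Y_+$ and $z\in Z_+$ then $0\le y\wedge z\le z$, hence $y\wedge z\in Z$ because $Z$ is an ideal of~$Y$; thus every quantity $\bignorm{y_\alpha\wedge z}_Z$ appearing below is meaningful, and the order interval $[-z,z]$ is the same whether formed in $Z$ or in~$X$ (again because $Z$ is an ideal of $X$).

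For the implication $y_\alpha\goesunZ 0\Rightarrow y_\alpha\goesunX 0$ I would copy the argument of Proposition~\ref{dense}, inserting the norm of the embedding. Fix $x\in X_+$ and $\varepsilon>0$; by norm density of $Z$ in $X$ choose $z\in Z$ with $\norm{x-z}_X<\varepsilon$, and replace $z$ by $z^+\in Z$, which does not increase $\norm{x-z}_X$ since $x\ge 0$; so we may assume $z\in Z_+$. From $y_\alpha\goesunZ 0$ we get $\bignorm{y_\alpha\wedge z}_Z\to 0$, hence $\bignorm{y_\alpha\wedge z}_X\to 0$ by continuity of the embedding $Z\hookrightarrow X$; combining this with $\bigabs{y_\alpha\wedge x-y_\alpha\wedge z}\le\abs{x-z}$ yields $\bignorm{y_\alpha\wedge x}_X<2\varepsilon$ for all large $\alpha$, so $y_\alpha\goesunX 0$.

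For the converse $y_\alpha\goesunX 0\Rightarrow y_\alpha\goesunZ 0$ I would invoke Lemma~\ref{Amemiya}. Fix $z\in Z_+\subseteq X_+$; then $\bignorm{y_\alpha\wedge z}_X\to 0$, and the net $(y_\alpha\wedge z)$ lies in the order interval $[-z,z]$. By Lemma~\ref{Amemiya}, whose hypotheses hold ($X$ is a Banach lattice, $Z$ is order continuous, and $Z$ embeds continuously into $X$ as an ideal), the $\norm{\cdot}_X$- and $\norm{\cdot}_Z$-subspace topologies on $[-z,z]$ coincide; since $0\in[-z,z]$ and $y_\alpha\wedge z\to 0$ in the former, it converges to $0$ in the latter, i.e. $\bignorm{y_\alpha\wedge z}_Z\to 0$. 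As $z\in Z_+$ was arbitrary, $y_\alpha\goesunZ 0$, which completes the equivalence.

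The nontrivial direction is this last one — it is precisely the direction not handled by the density technique of Proposition~\ref{dense}, since $\norm{\cdot}_Z$ is a strictly stronger norm than $\norm{\cdot}_X$ on $Z$ — and the only delicate point inside it is that Lemma~\ref{Amemiya} is proved via sequences. I would address this by noting that an order interval is a metrizable subspace of a normed lattice, so the sequential agreement established in the proof of the lemma upgrades to equality of the two subspace topologies on $[-z,z]$, whence net convergence in $\norm{\cdot}_X$ on that interval transfers to net convergence in $\norm{\cdot}_Z$. Everything else is a routine adaptation of Propositions~\ref{ops} and~\ref{dense}.
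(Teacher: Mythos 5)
Your proof is correct and follows essentially the same route as the paper's: the nontrivial direction ($y_\alpha\goesunX 0$ implies $y_\alpha\goesunZ 0$) via Lemma~\ref{Amemiya} applied to the net $(y_\alpha\wedge z)$ inside the order interval determined by $z$, and the converse by passing through $\bigl(Z,\norm{\cdot}_X\bigr)$ — which you inline rather than citing Proposition~\ref{dense} directly, but the argument is identical. Your side remark that the sequential proof of Lemma~\ref{Amemiya} suffices because order intervals are metrizable in either norm is a fair point, though the lemma as stated already asserts agreement of the topologies, so no upgrade is needed.
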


\begin{proof}
  Again, it suffices to show that $y_\alpha\goesunX 0$ iff
  $y_\alpha\goesunZ 0$ for every net $(y_\alpha)$ in~$Y_+$.
  Suppose that $y_\alpha\goesunX 0$. Fix $z\in Z_+$. Then
  $\norm{y_\alpha\wedge z}_X\to 0$. Since this net is contained in
  $[0,z]$, Lemma~\ref{Amemiya} yields that $\norm{y_\alpha\wedge
    z}_Z\to 0$. Thus, $y_\alpha\goesunZ 0$.

  Suppose now that $y_\alpha\goesunZ 0$. Since the inclusion of $Z$
  into $X$ is continuous, we have $\norm{y_\alpha\wedge z}_X\to 0$ for
  every $z\in Z_+$. It follows that $(y_\alpha)$ un-converges to zero
  with respect to $\bigl(Z,\norm{\cdot}_X\bigr)$. It follows now from
  Proposition~\ref{dense} that $y_\alpha\goesunX 0$.
\end{proof}

\begin{theorem}\label{un-same}
  Let $\bigl(X_1,\norm{\cdot}_1\bigr)$ and
  $\bigl(X_2,\norm{\cdot}_2\bigr)$ be two order continuous Banach
  lattices which are both order dense ideals in a vector
  lattice~$Y$. Then $X_1$ and $X_2$ induce the same un-topology
  on~$Y$.
\end{theorem}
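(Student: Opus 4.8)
The plan is to reduce the statement to a comparison of the two un-topologies on the common ideal $X_1\cap X_2$, and then transport the equality up to $Y$ using the density results already established. First I would observe that $W:=X_1\cap X_2$ is an ideal in $Y$; moreover, since $X_1$ and $X_2$ are both order dense in $Y$, so is $W$ (given $0<y\in Y_+$, pick $0<x_1\le y$ with $x_1\in X_1$, then pick $0<x_2\le x_1$ with $x_2\in X_2$, so $0<x_2\le y$ and $x_2\in W$). Equip $W$ with, say, the norm $\norm{\cdot}_1+\norm{\cdot}_2$; then $W$ is an order continuous normed lattice that embeds continuously into both $X_1$ and $X_2$ as an order dense ideal (order density of $W$ in $X_i$ follows from order density of $W$ in $Y\supseteq X_i$).

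The key step is to argue that $W$ is in fact \emph{norm dense} in each $X_i$. This is where I expect the main obstacle to lie: order density of an ideal does not in general imply norm density, but it does when the ambient lattice is order continuous. Indeed, if $Z$ is an order dense ideal in an order continuous normed lattice $X$, then for $x\in X_+$ the set $\{x\wedge z: z\in Z_+\}$ increases to $x$ in order (by \cite[Theorem~1.27]{Aliprantis:03}, quoted in the excerpt), hence converges to $x$ in norm by order continuity; so $x$ lies in the norm closure of $Z$, and by taking positive and negative parts every element of $X$ does. Applying this with $X=X_1$ (resp. $X=X_2$) and $Z=W$ shows $W$ is a norm dense ideal in $X_1$ and in $X_2$.

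Now I would invoke Theorem~\ref{nested} twice. With the roles "$X$"$=X_1$ (a Banach lattice) and "$Z$"$=W$ (an order continuous normed lattice continuously embedded in $X_1$ as a norm dense ideal), Theorem~\ref{nested} gives that $W$ and $X_1$ induce the same un-topology on $Y$. Similarly, $W$ and $X_2$ induce the same un-topology on $Y$. Chaining these two equalities yields that $X_1$ and $X_2$ induce the same un-topology on $Y$, which is the assertion.

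One technical point to verify carefully: Theorem~\ref{nested} requires the larger space to be a \emph{Banach} lattice and the smaller to be an order continuous normed lattice embedding continuously as a norm dense ideal; here $X_1$ and $X_2$ are Banach lattices by hypothesis, $W$ with the sum norm is order continuous (a finite sum of order continuous norms is order continuous), the inclusions $W\hookrightarrow X_i$ are contractive hence continuous, and norm density was established in the previous paragraph — so all hypotheses are met, and no further argument about $Y$ itself is needed beyond what Theorem~\ref{nested} already supplies. The only mild subtlety is confirming that $W\neq\{0\}$ is genuinely large enough to be dense rather than merely nonzero, but that is exactly what the order-continuity argument above delivers.
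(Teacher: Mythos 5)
Your proposal is correct and follows essentially the same route as the paper: both take $Z=X_1\cap X_2$ with an equivalent norm (the paper uses the max of the two norms, you use the sum), note that order density plus order continuity gives norm density in each $X_i$, and then apply Theorem~\ref{nested} twice. Your explicit verification that order density implies norm density is a detail the paper only asserts parenthetically, but the argument is the same.
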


\begin{proof}
  Let $Z=X_1\cap X_2$. It is easy to see that $Z$ is an order dense
  ideal of~$Y$. In particular, $Z$ is an order (and, therefore, norm)
  dense ideal in both $X_1$ and~$X_2$. For $z\in Z$, define
  $\norm{z}=\max\bigl\{\norm{z}_{X_1},\norm{z}_{X_2}\bigr\}$. Then $Z$
  is an order continuous normed lattice and the inclusions of $Z$ into
  $X_1$ and $X_2$ are continuous. Applying Theorem~\ref{nested} to
  pairs $(X_1,Z)$ and $(X_2,Z)$, we get the desired result.
\end{proof}

\section{Un-topology and weak units}

In this section, we assume that $X$ is a Banach lattice, though most
of the results extend to the case when $X$ is only a normed
lattice. As before, we assume that $X$ is also an ideal in a vector
lattice~$Y$. It was shown in Lemma~2.11 of~\cite{DOT} that a net
$(x_\alpha)$ in a Banach lattice with a quasi-interior point $u$
un-converges to $x$ iff $\abs{x_\alpha-x}\wedge u\to 0$. We now extend
this result.

\begin{proposition}\label{qip}
  Suppose that $u\ge 0$ is a quasi-interior point of $X$ and
  $(y_\alpha)$ and $y$ are in~$Y$. Then $y_\alpha$ un-converges to $y$
  with respect to $X$ iff $\abs{y_\alpha-y}\wedge u\goesnorm 0$ in~$X$.
\end{proposition}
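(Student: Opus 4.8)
The plan is to prove the nontrivial direction: assuming $\abs{y_\alpha-y}\wedge u\goesnorm 0$, deduce $\bignorm{\abs{y_\alpha-y}\wedge x}\to 0$ for every $x\in X_+$. As usual, by Proposition~\ref{ops} it suffices to treat the case $y=0$ and a positive net, so set $v_\alpha=\abs{y_\alpha}\in Y_+$ and assume $v_\alpha\wedge u\goesnorm 0$; we must show $\norm{v_\alpha\wedge x}\to 0$ for each $x\in X_+$. The obvious difficulty is that $x$ need not be dominated by any multiple of $u$, so we cannot simply bound $v_\alpha\wedge x$ by $v_\alpha\wedge (cu)$. Instead I would exploit that $u$ is a quasi-interior point: the ideal generated by $u$, namely $I_u=\bigcup_{n} [-nu,nu]$, is norm dense in $X$, or equivalently $(nu)\wedge x\uparrow x$ in norm for every $x\in X_+$ (this is the standard characterisation of quasi-interior points in a Banach lattice; it is the analogue of what is used for Lemma~2.11 of~\cite{DOT}).

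Fix $x\in X_+$ and $\varepsilon>0$. First choose $n$ with $\norm{x-(nu)\wedge x}<\varepsilon$. Then estimate, using $0\le v_\alpha\wedge x\le v_\alpha\wedge(nu) + \bigl(x-(nu)\wedge x\bigr)$ — indeed from $x\le (nu)\wedge x + (x - (nu)\wedge x)$ and monotonicity of $a\mapsto v_\alpha\wedge a$ together with $v_\alpha\wedge(a+b)\le v_\alpha\wedge a + b$ for $b\ge 0$ we get $v_\alpha\wedge x\le v_\alpha\wedge\bigl((nu)\wedge x\bigr) + \bigl(x-(nu)\wedge x\bigr)\le v_\alpha\wedge(nu) + \bigl(x-(nu)\wedge x\bigr)$ — so that
\begin{displaymath}
  \norm{v_\alpha\wedge x}\le \norm{v_\alpha\wedge(nu)} + \norm{x-(nu)\wedge x} < \norm{v_\alpha\wedge(nu)} + \varepsilon .
\end{displaymath}
Finally, since $v_\alpha\wedge u\goesnorm 0$ and $v_\alpha\wedge(nu)\le n\,(v_\alpha\wedge u)$ (because $v_\alpha\wedge(nu)\le nu$ and $v_\alpha\wedge(nu)\le n v_\alpha$, so $v_\alpha\wedge(nu)\le n(v_\alpha\wedge u)$), we have $\norm{v_\alpha\wedge(nu)}\le n\norm{v_\alpha\wedge u}\to 0$; hence $\limsup_\alpha\norm{v_\alpha\wedge x}\le\varepsilon$. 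As $\varepsilon$ was arbitrary, $\norm{v_\alpha\wedge x}\to 0$, which is the claim. The converse direction is immediate since $u\in X_+$.

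The only real content is the quasi-interior point fact $(nu)\wedge x\uparrow x$ in norm; everything else is routine lattice arithmetic of the inequalities $v\wedge(a+b)\le v\wedge a+b$ and $v\wedge(nu)\le n(v\wedge u)$. I expect the main obstacle, if any, is purely bookkeeping: making sure the order-theoretic identity $v_\alpha\wedge x\le v_\alpha\wedge(nu)+(x-(nu)\wedge x)$ is stated cleanly, which follows from the general inequality $a\wedge c\le a\wedge b + (c-b)^+$ applied with $b=(nu)\wedge x\le x=c$. If one prefers to avoid the quasi-interior characterisation, one can alternatively invoke Proposition~\ref{dense}: the ideal $I_u$ is norm dense in $X$, so the un-topology induced by $I_u$ equals that induced by $X$, and un-convergence with respect to $I_u$ reduces to testing against the order interval $[0,nu]$ for all $n$, i.e.\ to $v_\alpha\wedge(nu)\goesnorm 0$ for all $n$, which again follows from $v_\alpha\wedge u\goesnorm 0$ by the domination $v_\alpha\wedge(nu)\le n(v_\alpha\wedge u)$.
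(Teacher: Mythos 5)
Your proof is correct and follows essentially the same route as the paper: the paper's converse direction likewise observes that $\abs{y_\alpha-y}\wedge x\goesnorm 0$ for all positive $x$ in the norm-dense ideal $I_u$ (via $v\wedge(nu)\le n(v\wedge u)$) and then simply cites Proposition~\ref{dense}, which is exactly the shortcut you mention at the end. Your main write-up just unwinds that density argument by hand with an explicit $\varepsilon$-estimate, which is fine.
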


\begin{proof}
  The forward implication is trivial. Suppose that
  $\abs{y_\alpha-y}\wedge u\goesnorm 0$ in $X$. Then, clearly,
  $\abs{y_\alpha-y}\wedge x\goesnorm 0$ for every positive $x$ in the
  principal ideal~$I_u$. Now apply Proposition~\ref{dense}.
\end{proof}

\begin{corollary}
  Suppose that $X$ has a quasi-interior point and $y_\alpha\goesun 0$
  in~$Y$. There exist $\alpha_1<\alpha_2<\dots$ such that
  $y_{\alpha_n}\goesun 0$.
\end{corollary}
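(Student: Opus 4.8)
The plan is to use the quasi-interior point to collapse un-convergence to a single scalar condition via Proposition~\ref{qip}, and then to extract a sequence from a convergent net in the standard way. Let $u\ge 0$ be a quasi-interior point of~$X$. By Proposition~\ref{qip}, the assumption $y_\alpha\goesun 0$ in $Y$ says precisely that $\bignorm{\abs{y_\alpha}\wedge u}\to 0$. Hence it suffices to find indices $\alpha_1<\alpha_2<\cdots$ for which $\bignorm{\abs{y_{\alpha_n}}\wedge u}\to 0$; Proposition~\ref{qip}, applied to the sequence $(y_{\alpha_n})$ regarded as a net over $\mathbb N$, then yields $y_{\alpha_n}\goesun 0$.

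To build the sequence I would recurse on the index set. For each $n\in\mathbb N$ fix $\beta_n$ with $\bignorm{\abs{y_\alpha}\wedge u}<\tfrac1n$ whenever $\alpha\ge\beta_n$. Choose $\alpha_1\ge\beta_1$; having chosen $\alpha_{n-1}$, pick by directedness an index $\alpha_n$ dominating both $\alpha_{n-1}$ and $\beta_n$, and, if the index set has no greatest element, additionally take $\alpha_n$ strictly above $\alpha_{n-1}$, which only decreases $\bignorm{\abs{y_{\alpha_n}}\wedge u}$ and so does no harm. Then $\bignorm{\abs{y_{\alpha_n}}\wedge u}<\tfrac1n$ for every $n$, and the reduction above finishes the argument.

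There is essentially no obstacle here: all the substance is already packaged in Proposition~\ref{qip}, and what remains is the elementary fact that a net converging to~$0$ in a metric space contains a sequence converging to~$0$. The one mildly delicate point is the requirement that $\alpha_1<\alpha_2<\cdots$ be \emph{strictly} increasing; this is automatic unless the directed index set has a greatest element $\alpha^\ast$, in which case $y_\alpha\goesun 0$ forces $\bignorm{\abs{y_{\alpha^\ast}}\wedge x}=0$, and hence $\abs{y_{\alpha^\ast}}\wedge x=0$, for every $x\in X_+$ (recall $\abs{y_{\alpha^\ast}}\wedge x\in X$ since $X$ is an ideal), so that the constant choice $\alpha_n=\alpha^\ast$ already works and the statement is interpreted with ``$\le$''.
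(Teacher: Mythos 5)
Your proof is correct and follows exactly the route the paper intends: the paper states this corollary without proof as an immediate consequence of Proposition~\ref{qip}, namely that un-convergence collapses to the single normed condition $\bignorm{\abs{y_\alpha}\wedge u}\to 0$, from which one extracts an increasing sequence of indices in the standard way. Your extra care about strict increase of the indices (and the greatest-element case) is harmless and fine.
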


It was shown in Theorem~3.2 of~\cite{KMT} that un-topology on $X$ is
metrizable iff $X$ has a quasi-interior point. We now extend this
result to~$Y$.

\begin{theorem}\label{metriz}
  The following are equivalent:
  \begin{enumerate}
  \item\label{metriz-Y} The un-topology on $Y$ is metrizable;
  \item\label{metriz-X} The un-topology on $X$ is metrizable and $X$ is
    order dense in~$Y$;
  \item\label{metriz-qip} $X$ contains a quasi-interior point which is
    also a weak unit in~$Y$.
  \end{enumerate}
\end{theorem}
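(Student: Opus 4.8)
The plan is to prove the three conditions equivalent by showing \ref{metriz-qip} $\Rightarrow$ \ref{metriz-Y} $\Rightarrow$ \ref{metriz-X} $\Rightarrow$ \ref{metriz-qip}, leaning on the already-established correspondence between metrizability of a first-countable topological-vector-lattice-type topology and the existence of a countable base of the un-neighbourhoods $U_{\varepsilon,x}$.

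First I would treat \ref{metriz-qip} $\Rightarrow$ \ref{metriz-Y}. If $u\in X_+$ is a quasi-interior point of $X$, then by Proposition~\ref{qip} a net $(y_\alpha)$ in $Y$ un-converges to $y$ with respect to $X$ iff $\norm{\abs{y_\alpha-y}\wedge u}\to 0$. This means the countable family $\{U_{1/n,u}:n\in\mathbb N\}$ is a neighbourhood base at $0$ for the un-topology on $Y$; since $u$ is a weak unit in $Y$, the argument of Proposition~\ref{Hausdorff} shows the topology is Hausdorff. A translation-invariant, first-countable, Hausdorff topology on a set with a reasonable group structure is metrizable — more carefully, I would define $d(y,z)=\norm{\abs{y-z}\wedge u}\wedge 1$ (or a suitable bounded modification) and check directly that $d$ is a metric inducing the un-topology; the triangle inequality follows from $\abs{y-z}\wedge u\le(\abs{y-w}\wedge u)+(\abs{w-z}\wedge u)$ and monotonicity of the norm. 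This avoids appealing to any general metrization theorem and sidesteps the non-linearity issue flagged earlier in the paper.

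Next, \ref{metriz-Y} $\Rightarrow$ \ref{metriz-X}. The restriction of the un-topology on $Y$ to $X$ is the native un-topology on $X$, as remarked in the introduction, so metrizability of the former restricts to metrizability of the latter. For order density of $X$ in $Y$: a metrizable topology is Hausdorff, so Proposition~\ref{Hausdorff} gives exactly that $X$ is order dense in $Y$.

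Finally, \ref{metriz-X} $\Rightarrow$ \ref{metriz-qip}. By Theorem~3.2 of~\cite{KMT}, metrizability of the un-topology on $X$ gives a quasi-interior point $u$ of $X$. It remains to upgrade $u$ to a weak unit in $Y$: suppose $y\in Y_+$ with $y\wedge u=0$; since $X$ is order dense in $Y$, if $y\ne 0$ there is $x\in X$ with $0<x\le y$, and then $x\wedge u=0$ with $x\in X_+$ nonzero, contradicting that $u$ is a weak unit in $X$ (a quasi-interior point is in particular a weak unit). Hence $y=0$, so $u$ is a weak unit in $Y$. The main obstacle I anticipate is the metrization step in \ref{metriz-qip} $\Rightarrow$ \ref{metriz-Y}: because the un-topology need not be linear, one cannot simply cite the Birkhoff–Kakutani theorem for topological groups or the standard metrization of topological vector spaces, so the explicit construction and verification of $d$ must be carried out by hand, and one must be slightly careful that $d$-balls coincide with the sets $U_{1/n,u}$ up to the $\wedge 1$ truncation.
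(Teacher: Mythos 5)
Your proposal is correct and follows essentially the same route as the paper: the same cycle of implications, the same appeal to Proposition~\ref{Hausdorff} and to Theorem~3.2 of~\cite{KMT}, and the same explicit metric $d(y_1,y_2)=\bignorm{\abs{y_1-y_2}\wedge u}$ (your extra $\wedge\,1$ truncation is harmless but unnecessary, since this quantity is already bounded by $\norm{u}$). The details you add --- the lattice triangle inequality and the upgrade of $u$ from weak unit of $X$ to weak unit of $Y$ via order density --- are exactly the steps the paper leaves implicit.
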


\begin{proof}
  \eqref{metriz-Y}$\Rightarrow$\eqref{metriz-X} Suppose that the
  un-topology on $Y$ is metrizable. It follows immediately that its
  restriction to $X$ is metrizable. Furthermore, being metrizable, the
  un-topology on $Y$ is Hausdorff, so that $X$ is order dense in $Y$
  by Proposition~\ref{Hausdorff}.

  \eqref{metriz-X}$\Rightarrow$\eqref{metriz-qip} By Theorem~3.2
  in~\cite{KMT}, $X$ has a quasi-interior point, say~$u$. It follows
  that $u$ is a weak unit in~$X$. Since $X$ is order dense in~$Y$, $u$
  is a weak unit in~$Y$.

  \eqref{metriz-qip}$\Rightarrow$\eqref{metriz-Y} Let $u$ be as
  in~\eqref{metriz-qip}. For $y_1,y_2\in Y$, define 
  \begin{math}
    d(y_1,y_2)=\bignorm{\abs{y_1-y_2}\wedge u}.
  \end{math}
  Since $u$ is a weak unit in~$Y$, $d$ is a metric on $Y$; the proof
  is similar to that of \cite[Theorem~3.2]{KMT}. Note that
  $d(y_\alpha,y)\to 0$ iff $\abs{y_\alpha-y}\wedge u\goesnorm 0$ in~$X$. By
  Proposition~\ref{qip}, this is equivalent to $y_\alpha\goesun
  y$. Therefore, $d$ is a metric for un-topology.
\end{proof}


\section{Atomic Banach lattices}
\label{sec:atomic}

Recall that a positive non-zero vector $a$ in a vector lattice $E$ is
an \term{atom} if the principal ideal $I_a$ equals the span of~$a$. In
this case, $I_a$ is a projection band. The corresponding band
projection $P_a$ has the form $P_ax=\varphi_a(x)a$, where $\varphi$ is
the \term{coordinate functional} of~$a$. We say that $E$ is atomic if
it equals the band generated by all the atoms of~$E$.

Suppose that $E$ is atomic. Let $A$ be a maximal
collection of pair-wise disjoint atoms in~$E$. A net $(x_\alpha)$ in
$E$ converges to zero \term{coordinate-wise} if
$\varphi_a(x_\alpha)\to 0$ for every atom $a$ (or, equivalently, for
every $a\in A$). For every $x\in E_+$, one has
\begin{math}
  x=\sup\bigl\{\varphi_a(x)a\mid a\in A\bigr\}.
\end{math}
This allows one to identify $x$ with the function
$a\in A\mapsto\varphi_a(x)$ in $\mathbb R^A$. Extending this map
to~$E$, one produces a lattice isomorphism from $E$ onto an order
dense sublattice of~$\mathbb R^A$. Thus, every atomic vector lattice
can be identified with an order dense sublattice of~$\mathbb
R^A$. Furthermore, $E$ is order complete iff it is an ideal in~$\mathbb R^A$. For details, we refer the reader to
\cite[p.~143]{Schaefer:74}.  With a minor abuse of notation, we
identify every $x\in E$ with the function $a\mapsto\varphi_a(x)$ in
$\mathbb R^A$; in particular, we identify $a\in A$ with the
characteristic function of~$\{a\}$.

It was shown in Corollary~4.14 of \cite{KMT} that if $X$ is an atomic
order continuous Banach lattice then un-convergence in $X$ coincides
with coordinate-wise convergence. Taking $X=\ell_\infty$ shows that
this may fail when $X$ is not order continuous.

Let $X$ be an order complete atomic Banach lattice, represented as an
order dense ideal in~$\mathbb R^A$. The coordinate-wise convergence on
$X$ is then the restriction of the point-wise convergence
on~$\mathbb R^A$.  We can now define un-convergence on $\mathbb R^A$
induced by~$X$.

\begin{proposition}\label{atom-uo-ptwise}
  Let $X$ be an order complete atomic Banach lattice represented as an
  order dense ideal in~$\mathbb R^A$. For a net $(y_\alpha)$
  in~$\mathbb R^A$, if $y_\alpha\goesun 0$ then $y_\alpha\to 0$
  point-wise. The converse is true iff $X$ is order continuous.
\end{proposition}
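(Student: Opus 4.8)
\emph{Proof proposal.} I would split the statement into three parts: (a) $y_\alpha\goesun 0$ always forces $y_\alpha\to 0$ point-wise; (b) when $X$ is order continuous, point-wise convergence to $0$ forces $y_\alpha\goesun 0$; and (c) when $X$ is not order continuous, (b) fails. First I would record two elementary facts. Every atom $a\in A$ lies in $X_+$: order density of $X$ in $\mathbb R^A$ produces some $0<w\le a$ in $X$, necessarily of the form $w=\lambda a$ with $\lambda>0$, and then $a\in X$ since $X$ is an ideal. And, reading $a\in A$ as the characteristic function of $\{a\}$, one has $\abs{y_\alpha}\wedge a=\min\bigl\{\abs{\varphi_a(y_\alpha)},1\bigr\}\,a$, hence $\bignorm{\abs{y_\alpha}\wedge a}=\min\bigl\{\abs{\varphi_a(y_\alpha)},1\bigr\}\norm a$. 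For (a): if $y_\alpha\goesun 0$ then this quantity tends to $0$ for every $a\in A$, and since $\norm a>0$ it follows that $\varphi_a(y_\alpha)\to 0$ for every $a$, i.e. $y_\alpha\to 0$ point-wise.

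For (b), fix $x\in X_+$ and $\varepsilon>0$. Since the pairwise disjoint finite partial sums $\sum_{a\in F}\varphi_a(x)\,a$ increase to $x$ as $F$ runs over the finite subsets of $A$, order continuity of $X$ gives norm convergence, so I can choose a finite $F\subseteq A$ with $\bignorm{x-\sum_{a\in F}\varphi_a(x)\,a}<\varepsilon$. For each of the finitely many $a\in F$ we have $\varphi_a(y_\alpha)\to 0$, so there is $\alpha_0$ with $\sum_{a\in F}\min\bigl\{\abs{\varphi_a(y_\alpha)},\varphi_a(x)\bigr\}\norm a<\varepsilon$ whenever $\alpha\ge\alpha_0$. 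Writing $\abs{y_\alpha}\wedge x$ as the sum of its restrictions to $F$ and to $A\setminus F$, the first restriction equals $\sum_{a\in F}\min\bigl\{\abs{\varphi_a(y_\alpha)},\varphi_a(x)\bigr\}\,a$ while the second is dominated by $x-\sum_{a\in F}\varphi_a(x)\,a$; lattice-norm monotonicity and the triangle inequality then give $\bignorm{\abs{y_\alpha}\wedge x}<2\varepsilon$ for $\alpha\ge\alpha_0$. Hence $\abs{y_\alpha}\wedge x\goesnorm 0$ for every $x\in X_+$, i.e. $y_\alpha\goesun 0$.

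For (c) I would prove the contrapositive. If $X$ is not order continuous, pick (after passing to a cofinal subnet) a net $z_\beta\downarrow 0$ in $X$ with $\norm{z_\beta}\ge\delta>0$ for all $\beta$, and pass to a tail so that $z_\beta\le x:=z_{\beta_0}$ for all $\beta$. Order density of $X$ in $\mathbb R^A$ forces $z_\beta\downarrow 0$ in $\mathbb R^A$ as well: a nonzero positive lower bound in $\mathbb R^A$ would, by order density, dominate a nonzero element of $X$, contradicting $\inf_X z_\beta=0$. Since infima in $\mathbb R^A$ are computed coordinate-wise, $z_\beta\to 0$ point-wise. On the other hand $\abs{z_\beta}\wedge x=z_\beta$, so $\bignorm{\abs{z_\beta}\wedge x}=\norm{z_\beta}\ge\delta\not\to 0$ and $z_\beta\not\goesun 0$; thus the converse in the statement fails when $X$ is not order continuous.

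The hard part is step (b): Corollary~4.14 of~\cite{KMT} only yields $\abs{y_\alpha}\wedge x\goesun 0$ in $X$, which is weaker than the norm convergence $\abs{y_\alpha}\wedge x\goesnorm 0$ that the definition of un-convergence on $Y$ demands, so order continuity must be invoked directly to make the tail $x-\sum_{a\in F}\varphi_a(x)\,a$ small in norm; after that the finite part is routine because $F$ is finite.
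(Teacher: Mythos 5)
Your proof is correct. Parts (a) and (b) follow essentially the paper's route: the paper handles (b) by observing that $Z=\Span A$ is a norm dense ideal in $X$ (norm density being exactly your order-continuity step that the finite partial sums $\sum_{a\in F}\varphi_a(x)a$ converge to $x$ in norm), checking that $\abs{y_\alpha}\wedge z\goesnorm 0$ for $z\in Z_+$, and then invoking Proposition~\ref{dense}; you simply inline that density argument with an explicit $2\varepsilon$ estimate, which costs a little more writing but makes the mechanism transparent. Part (c) is where you genuinely diverge: the paper uses the characterization of order continuity via disjoint order bounded sequences --- such a sequence is coordinate-wise null, hence un-null by hypothesis, hence norm null since un-convergence agrees with norm convergence on order bounded sets --- whereas you work directly from the definition, taking a decreasing net $z_\beta\downarrow 0$ with norms bounded below and showing it is point-wise null (via order density of the ideal, which preserves the infimum) but not un-null against $x=z_{\beta_0}$. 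Your version avoids the disjoint-sequence criterion at the price of the small lemma that order dense ideals preserve decreasing infima; both arguments are short and correct.
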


\begin{proof}
  Suppose that $y_\alpha\goesun 0$ in~$\mathbb R^A$. For every $a\in
  A$, we have $\abs{y_\alpha}\wedge a\goesnorm 0$; it follows
  easily that $y_\alpha(a)\to 0$.

  Suppose that $X$ is order continuous and $(y_\alpha)$ is a net in
  $\mathbb R^A$ which converges to zero point-wise.  Let $Z=\Span A$
  in~$X$. Clearly, $Z$ is an ideal in $X$; it is norm dense because
  $X$ is order continuous.  It is easy to see that
  $\abs{y_\alpha}\wedge z\to 0$ in norm whenever $z\in Z_+$.
  Therefore, $y_\alpha\goesunZ 0$. By Proposition~\ref{dense}, we have
  $y_\alpha\goesunX 0$.

  Suppose now that if $y_\alpha\to 0$ point-wise then
  $y_\alpha\goesun 0$ for every net $(y_\alpha)$ in $\mathbb R^A$. To
  prove that $X$ is order continuous, it suffices to show that every
  disjoint  order bounded sequence $(x_n)$ in $X$ is norm
  null. Clearly, such a sequence converges to zero coordinate-wise,
  hence, by assumption, $x_n\goesun 0$. Since $(x_n)$ is order
  bounded, we have $x_n\goesnorm 0$.
\end{proof}

What happens when $X$ is order complete but not order continuous?
Recall that in this case, $X$ contains a lattice copy of
$\ell_\infty$; see, e.g.,
\cite[Corollary~2.4.3]{Meyer-Nieberg:91}. So the following example is,
in some sense, representative.

\begin{example}\label{linfty}
  Let $X=\ell_\infty(\Omega)$ for some set~$\Omega$. In this case, $X$
  is atomic and may be viewed as an order dense ideal of~$\mathbb
  R^\Omega$. Note that $u=\one$ is a strong unit and, therefore, a
  quasi-interior point in~$X$. It can now be easily deduced from
  Proposition~\ref{qip} that the un-convergence induced on  $\mathbb
  R^\Omega$ by $X$ coincides with uniform convergence. 
\end{example}

\begin{example}
  Both $\ell_1$ and $\ell_\infty$ may be viewed as order dense ideals
  in~$\mathbb R^{\mathbb N}$. Let $(e_n)$ be the standard unit
  vector sequence. Then $e_n$ un-converges to zero in $\mathbb R^{\mathbb N}$
  with respect to $\ell_1$ but not to~$\ell_\infty$.
\end{example}

\section{Banach function spaces}
\label{sec:BFS}

We are going to extend Example~\ref{Lp} to Banach function
spaces. Throughout this section, we say that $X$ is a \term{Banach
  function space} if it is a Banach lattice which is an order dense
ideal in $L_0(\mu)$ for some measure space $(\Omega,\mathcal F,\mu)$.
Throughout this section, we assume that $\mu$ is $\sigma$-finite.
Note that by \cite[Corollary~5.22]{Abramovich:02}, $X$ has a weak
unit. Theorem~\ref{metriz} yields the following.

\begin{corollary}\label{bfs-metriz}
  Let $X$ be an order continuous Banach function space over
  $(\Omega,\mathcal F,\mu)$. The un-topology induced by $X$ on
  $L_0(\mu)$ is metrizable.
\end{corollary}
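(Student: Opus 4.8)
The plan is to verify hypothesis \eqref{metriz-qip} of Theorem~\ref{metriz} for $Y=L_0(\mu)$, after which the metrizability is immediate from the implication \eqref{metriz-qip}$\Rightarrow$\eqref{metriz-Y} of that theorem. Thus the task reduces to producing a vector $u\in X_+$ that is simultaneously a quasi-interior point of $X$ and a weak unit of $L_0(\mu)$.

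First I would invoke the fact quoted just before the statement: since $\mu$ is $\sigma$-finite, \cite[Corollary~5.22]{Abramovich:02} provides a weak unit $u$ of $X$. Next I would upgrade $u$ to a quasi-interior point of $X$ using order continuity. Being a weak unit, $u$ generates $X$ as a band, so the principal ideal $I_u$ is order dense in $X$; and an order dense ideal of an order continuous Banach lattice is norm dense (a standard fact, used e.g.\ in the proof of Theorem~\ref{un-same}). Hence $I_u$ is norm dense in $X$, i.e.\ $u$ is a quasi-interior point of $X$. Finally, I would check that $u$ is a weak unit of $L_0(\mu)$: by definition a Banach function space is an order dense ideal of $L_0(\mu)$, so if $y\in L_0(\mu)_+$ satisfies $y\wedge u=0$, then any $x\in X$ with $0\le x\le y$ satisfies $x\wedge u\le y\wedge u=0$, whence $x=0$ because $u$ is a weak unit of $X$; order density of $X$ in $L_0(\mu)$ then forces $y=0$. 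This last step is exactly the argument appearing in the proof of \eqref{metriz-X}$\Rightarrow$\eqref{metriz-qip} in Theorem~\ref{metriz}.

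I do not expect a genuine obstacle: every ingredient is either cited (existence of a weak unit in $X$) or already established in the excerpt (order dense ideals of order continuous Banach lattices are norm dense; a weak unit of an order dense sublattice is a weak unit of the ambient lattice). The only points warranting a sentence of care are the equivalence ``$u$ is a weak unit $\iff$ $I_u$ is order dense'' and the transfer of the weak-unit property from $X$ to $L_0(\mu)$, both of which are routine.
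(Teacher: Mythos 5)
Your proposal is correct and takes essentially the same route as the paper: the paper likewise invokes \cite[Corollary~5.22]{Abramovich:02} for a weak unit of $X$ and then applies Theorem~\ref{metriz}, leaving unwritten exactly the details you supply (the weak unit is a quasi-interior point because $X$ is order continuous, and it remains a weak unit in $L_0(\mu)$ because $X$ is order dense there).
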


Next, we are going to show the un-convergence induced by $X$ on $L_0(\mu)$ 
agrees with ``local'' convergence in measure. 
Let $A\in\mathcal F$. For $x\in L_0(\mu)$ we write $x_{|A}$ for the
restriction of $x$ to $A$. With some abuse of notation, we identify it
with $x\cdot\chi_A$. For $p\in[0,\infty]$, we write $L_p(A)$ for the
set $\bigl\{x_{|A}\mid x\in L_p(\mu)\bigr\}$.

\begin{theorem}\label{bfs-measure}
  Let $X$ be an order continuous Banach function space over
  $(\Omega,\mathcal F,\mu)$, and $(y_\alpha)$ is a net in
  $L_0(\mu)$. Then $y_\alpha\goesunX 0$ iff ${y_\alpha}_{|A}$
  converges to zero in measure whenever $\mu(A)<\infty$.
\end{theorem}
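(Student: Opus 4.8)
The plan is to reduce the statement to the case $X=L_1(\mu)$ and then estimate directly. Since $\mu$ is $\sigma$-finite, $L_1(\mu)$ is an order dense ideal in $L_0(\mu)$: it is plainly an ideal, and given $0<f\in L_0(\mu)$ one uses $\sigma$-finiteness to find a set $T\subseteq\{f>0\}$ with $0<\mu(T)<\infty$ on which $f$ is bounded, so that $0<f\chi_T\le f$ and $f\chi_T\in L_1(\mu)$. As $L_1(\mu)$ is also an order continuous Banach lattice, Theorem~\ref{un-same} applies to the pair $X$, $L_1(\mu)$ and shows that they induce the same un-topology on $L_0(\mu)$. Hence it is enough to prove the theorem for $X=L_1(\mu)$, where un-convergence to zero means $\int\bigl(\abs{y_\alpha}\wedge x\bigr)\,d\mu\to0$ for every $x\in L_1(\mu)_+$.

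For the forward implication, fix $A$ with $\mu(A)<\infty$, so $\chi_A\in L_1(\mu)$, and fix $\varepsilon\in(0,1]$. On the set $E_\alpha=\{t\in A:\abs{y_\alpha(t)}\ge\varepsilon\}$ we have $\abs{y_\alpha}\wedge\chi_A=\abs{y_\alpha}\wedge1\ge\varepsilon$, hence $\abs{y_\alpha}\wedge\chi_A\ge\varepsilon\chi_{E_\alpha}$ on $\Omega$; integrating gives $\varepsilon\,\mu(E_\alpha)\le\int\bigl(\abs{y_\alpha}\wedge\chi_A\bigr)\,d\mu\to0$, so $\mu(E_\alpha)\to0$. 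Since $\varepsilon\in(0,1]$ was arbitrary, ${y_\alpha}_{|A}\to0$ in measure.

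For the converse, fix $x\in L_1(\mu)_+$ and $\eta>0$. The sets $A_n=\{x\ge1/n\}$ have finite measure, and $\int_{\Omega\setminus A_n}x\,d\mu\to0$ by dominated convergence, so we may fix $A=A_n$ with $\int_{\Omega\setminus A}x\,d\mu<\eta/3$. By absolute continuity of the integral, choose $\zeta>0$ with $\int_E x\,d\mu<\eta/3$ whenever $\mu(E)<\zeta$, and choose $\varepsilon>0$ with $\varepsilon\,\mu(A)<\eta/3$. Splitting $A$ into $\{\abs{y_\alpha}<\varepsilon\}$ and $\{\abs{y_\alpha}\ge\varepsilon\}$, for every $\alpha$ one has
\begin{displaymath}
  \int\bigl(\abs{y_\alpha}\wedge x\bigr)\,d\mu
  \le\int_{\Omega\setminus A}x\,d\mu
  +\varepsilon\,\mu(A)
  +\int_{A\cap\{\abs{y_\alpha}\ge\varepsilon\}}x\,d\mu.
\end{displaymath}
By hypothesis $\mu\bigl(A\cap\{\abs{y_\alpha}\ge\varepsilon\}\bigr)<\zeta$ for $\alpha$ large, so the right-hand side is $<\eta$ eventually; hence $\int\bigl(\abs{y_\alpha}\wedge x\bigr)\,d\mu\to0$, as required.

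The main obstacle is the reduction in the first paragraph: one must recognize that Theorem~\ref{un-same} permits replacing the arbitrary order continuous Banach function space $X$ by $L_1(\mu)$, whose norm is an integral and is therefore easy to estimate. Without it, one would have to relate $\norm{\cdot}_X$-convergence of the net $\bigl(\abs{y_\alpha}\wedge x\bigr)$ to convergence in measure directly — which needs quantitative control of $\norm{\chi_E}_X$ by $\mu(E)$ on a fixed finite-measure set together with some care about nets versus sequences — and the reduction sidesteps all of this. It is worth double-checking that $\sigma$-finiteness is used exactly where claimed: in the order density of $L_1(\mu)$ in $L_0(\mu)$, in the fact that $\chi_A\in L_1(\mu)$ when $\mu(A)<\infty$, and in exhausting $\{x>0\}$ by the sets $A_n$.
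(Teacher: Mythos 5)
Your proof is correct and takes essentially the same route as the paper: both arguments hinge on invoking Theorem~\ref{un-same} to replace $X$ by $L_1(\mu)$ (after noting $L_1(\mu)$ is an order dense ideal of $L_0(\mu)$) and then identifying un-$L_1(\mu)$ convergence with local convergence in measure. The only difference is in the endgame: the paper cites Example~\ref{Lp} for the finite-measure case together with Proposition~\ref{dense} applied to the ideal of $L_1$-functions supported on sets of finite measure, whereas you carry out the $L_1$ estimates directly; both versions are sound.
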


\begin{proof}
  By Theorem~\ref{un-same}, we may assume without loss of generality
  that $X=L_1(\mu)$. Suppose that
  $y_\alpha\xrightarrow{\text{un-}L_1(\mu)}0$, and let $A$ be a
  measurable set of finite measure. For every positive $x\in L_1(A)$,
  we have $\abs{y_\alpha}\wedge x\to 0$ in norm. It follows that the
  net $({y_\alpha}_{|A})$ in $L_0(A)$ is un-null with respect to
  $L_1(A)$. By Example~\ref{Lp}, we conclude that $({y_\alpha}_{|A})$
  converges to zero in measure.

  Conversely, suppose that ${y_\alpha}_{|A}$ converges to zero in
  measure whenever $\mu(A)<\infty$. Let $Z$ be the set of all
  functions in $L_1(\mu)$ which vanish outside of a set of finite
  measure. It is easy to see that $Z$ is a norm dense ideal in
  $L_1(\mu)$. Fix $z\in Z_+$. Find $A\in\mathcal F$ such that
  $\mu(A)<\infty$ and $z$ vanishes outside of~$A$. By assumption,
  ${y_\alpha}_{|A}$ converges to zero in measure. It follows by
  Example~\ref{Lp} that ${y_\alpha}_{|A}$ un-converges in $L_0(A)$
  with respect to $L_1(A)$. In particular, we have
  $\abs{y_\alpha}\wedge z\to 0$ in $L_1(A)$. It follows that
  $y_\alpha\goesunZ 0$ in $L_0(\mu)$. Proposition~\ref{dense} yields
  $y_\alpha\xrightarrow{\text{un-}L_1(\mu)}0$.
\end{proof}

\section{Universal completion}

In this section, we consider the special case when $Y=X^u$.  Recall
that every vector lattice $E$ may be identified with an order dense
sublattice of its \emph{universal completion}~$E^u$, see, e.g.,
\cite[Theorem 7.21]{Aliprantis:03}. If, in addition, $E$ is order
complete, then $E$ is an ideal in $E^u$ by, e.g.,
\cite[Theorem~1.40]{Aliprantis:03}. It follows from
\cite[Theorem~7.23]{Aliprantis:03} that if $F$ is an order dense
sublattice of $E$ then $F^u=E^u$.

Suppose that $X$ is an order complete Banach lattice. The norm need
not extend to $X^u$; the latter is only a vector lattice. However,
using our construction, the un-topology of $X$ admits an extension
to~$X^u$. Combining results of the preceding sections, we get the
following.

\begin{theorem}\label{univ-metriz}
  Let $X$ be an order complete Banach lattice. Un-topology on
  $X^u$ is Hausdorff; it is metrizable iff $X$ has a quasi-interior
  point.
\end{theorem}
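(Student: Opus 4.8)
The plan is to obtain both assertions by applying the general results of the earlier sections to the pair $X \subseteq Y := X^u$. First I would record that, since $X$ is order complete, $X$ is an ideal in $X^u$ (by \cite[Theorem~1.40]{Aliprantis:03}), and $X$ is order dense in $X^u$ (by \cite[Theorem~7.21]{Aliprantis:03}); hence the hypothesis ``$X$ is an ideal in $Y$'' under which the whole paper operates is satisfied, and moreover $X$ is an \emph{order dense} ideal in $Y = X^u$. The Hausdorff claim is then immediate from Proposition~\ref{Hausdorff}: the un-topology on $X^u$ induced by $X$ is Hausdorff precisely because $X$ is order dense in $X^u$.

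For the metrizability claim I would invoke Theorem~\ref{metriz} with $Y = X^u$. That theorem says the un-topology on $Y$ is metrizable iff $X$ contains a quasi-interior point that is also a weak unit in $Y$. So the task reduces to showing: $X$ has a quasi-interior point if and only if $X$ has a quasi-interior point which is additionally a weak unit in $X^u$. The forward direction is the only thing with content. Suppose $u \ge 0$ is a quasi-interior point of $X$. Then $u$ is a weak unit of $X$ (a standard fact: the band generated by $u$ contains the norm-dense ideal $I_u$, hence is all of $X$). Since $X$ is order dense in $X^u$, a weak unit of $X$ remains a weak unit of $X^u$ — this is the same elementary observation already used in the proof of Theorem~\ref{metriz}, step \eqref{metriz-X}$\Rightarrow$\eqref{metriz-qip}: if $w \wedge |z| = 0$ for some $0 \ne z \in X^u$, order density produces $0 < x \le |z|$ with $x \in X$, and then $w \wedge x = 0$ contradicts $w$ being a weak unit of $X$. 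Thus condition \eqref{metriz-qip} of Theorem~\ref{metriz} holds, so the un-topology on $X^u$ is metrizable. Conversely, if the un-topology on $X^u$ is metrizable, then by \eqref{metriz-Y}$\Rightarrow$\eqref{metriz-qip} of Theorem~\ref{metriz}, $X$ has a quasi-interior point (indeed one that is a weak unit in $X^u$).

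I do not anticipate a serious obstacle here; the theorem is essentially an assembly of Propositions~\ref{Hausdorff} and the equivalences in Theorem~\ref{metriz}, once one checks that $X$ is an order dense ideal in $X^u$. The one point deserving a line of care is the passage ``quasi-interior point of $X$ $\Rightarrow$ weak unit of $X^u$'', which rests on order density of $X$ in $X^u$; but this is exactly the argument already present in the proof of Theorem~\ref{metriz}, so it can simply be cited. Accordingly the whole proof can be written in a few lines.
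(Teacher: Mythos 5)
Your proposal is correct and coincides with the paper's (unwritten) argument: the paper derives this theorem by exactly the same assembly, namely noting that an order complete $X$ is an order dense ideal in $X^u$ and then applying Proposition~\ref{Hausdorff} and Theorem~\ref{metriz}. Your extra remark that a quasi-interior point of $X$ is automatically a weak unit in $X^u$ by order density is precisely the observation already used in the proof of Theorem~\ref{metriz}, so nothing is missing.
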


The following examples underline that much of what we did in
previous section fits into this general framework.

\begin{example}\label{Xu-atom}
  In Section~\ref{sec:atomic}, we discussed the un-convergence induced
  by an order complete atomic Banach lattice $X$ onto~$\mathbb R^A$,
  where $A$ is a maximal pair-wise disjoint collection of atoms. It is
  easy to see that, in this case, $X^u$ may be identified with~$\mathbb R^A$.
\end{example}

\begin{example}
  In Example~\ref{Lp}, we discussed the case where $X=L_p(\mu)$ and
  $Y=L_0(\mu)$, where $\mu$ is a finite measure and $0\le p<\infty$.
  By \cite[Theorem 7.73]{Aliprantis:03}, $Y=X^u$ (even when $\mu$ is
  only assumed to be $\sigma$-finite).
\end{example}

\begin{example}
  In Section~\ref{sec:BFS}, we considered the case when $X$ is a an
  order complete Banach function space over a $\sigma$-finite measure
  $\mu$ and $Y=L_0(\mu)$.  Since $X$ is an order dense ideal in
  $L_0(\mu)$ and the latter is universally complete, we have
  $X^u=L_0(\mu)$.
\end{example}

\begin{remark}\label{un-gen}
  General un-convergence often reduces to un-convergence on $X^u$ as
  follows. Suppose that $X$ is an order complete Banach lattice which
  is an order dense ideal in a vector lattice~$Y$. We are interested
  in the un-convergence on $Y$ induced by~$X$. Note that $X$ is an
  order dense ideal in $X^u$ and that $X^u=Y^u$, so we may assume that
  $X\subseteq Y\subseteq X^u$. It follows that the un-convergence on
  $Y$ is the restriction of un-convergence on~$X^u$.
\end{remark}

\begin{remark}\label{noncompl}
  We now extend our definition of un-topology on $X^u$ to the case
  when $X$ is not order complete (and, therefore, need not be an ideal
  in $X^u$). In this case, we consider the un-topology on $X^u$
  induced by~$X^\delta$. In this setting, Remark~\ref{un-gen} remain
  valid provided that $Y$ is order complete. Since $X$ is majorizing
  in~$X^\delta$, the ``native'' un-topology of $X$ still agrees with
  the restriction to $X$ of the un-topology on $X^u$ induced
  by~$X^\delta$.
\end{remark}
 
In \cite[Proposition~6.2]{KMT}, it was shown that an order continuous
Banach lattice is un-complete iff it is finite-dimensional. We will
show next that the situation is completely different when we consider
un-topology on $X^u$ instead of~$X$.

\begin{theorem}
  If $X$ is an order continuous Banach lattice then the un-topology on
  $X^u$ is complete.
\end{theorem}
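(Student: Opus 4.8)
The plan is to reduce to the case of a Banach lattice with a weak unit, where the universal completion is $L_0$ of a finite measure and the induced un-topology is convergence in measure; the general case is then assembled from such pieces along a disjoint band decomposition of $X$. Since $X$ is order continuous it is Dedekind complete, so $X$ is an order dense ideal in $X^u$ and, by the remarks in the introduction, the un-topology on $X^u$ is linear and Hausdorff. By Zorn's lemma fix a maximal pairwise disjoint family $(x_i)_{i\in I}$ in $X_+$; let $B_i$ be the band generated by $x_i$, which is a projection band by Dedekind completeness, with band projection $P_i$, and write $P_F=\sum_{i\in F}P_i$ for finite $F\subseteq I$. Maximality gives $\sup_i P_i x=x$ for every $x\in X_+$, hence $P_Fx\uparrow x$, and order continuity of the norm then yields $\norm{x-P_Fx}\to 0$. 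Each $B_i$ is a closed ideal of $X$, hence an order continuous Banach lattice, and $x_i$ is a weak unit --- in fact a quasi-interior point --- of $B_i$.

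I would first identify $X^u$ with $\prod_{i\in I}B_i^u$ under the correspondence $x\leftrightarrow(P_ix)_i$. The algebraic direct sum $\bigoplus_i B_i$ (families with finite support) is order dense in $X$, since $0<x\in X$ forces $0<P_ix\le x$ for some $i$; hence it is order dense in $X^u$, so $(\bigoplus_i B_i)^u=X^u$ by \cite[Theorem~7.23]{Aliprantis:03}. On the other hand $\prod_i B_i^u$ is universally complete --- it is Dedekind complete, and laterally complete because suprema of disjoint families are formed coordinate-wise from the lateral completeness of the $B_i^u$ --- and $\bigoplus_i B_i$ is order dense in it (each $B_i$ being order dense in $B_i^u$); hence $(\bigoplus_i B_i)^u=\prod_i B_i^u$ as well, and combining the two gives $X^u=\prod_i B_i^u$.

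The main step --- and, I expect, the main obstacle --- is to show that the un-topology on each $B_i^u$ is complete. Since $B_i$ is an order continuous Banach lattice with a weak unit, the representation theorem (see, e.g., \cite[Theorem~2.7.8]{Meyer-Nieberg:91}) identifies $B_i$ with an order dense ideal of $L_1(\mu_i)$ for some probability measure $\mu_i$; thus $B_i$ is an order continuous Banach function space over the finite measure $\mu_i$, and $B_i^u=L_0(\mu_i)$. By Theorem~\ref{bfs-measure}, applied with the finite measure $\mu_i$, the un-topology induced on $L_0(\mu_i)$ is the topology of convergence in measure, which is classically complete and metrizable; hence $B_i^u$ is un-complete.

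Finally I would assemble the limit. Let $(y_\alpha)$ be a Cauchy net for the un-topology on $X^u=\prod_i B_i^u$. For each $i$ and each $z\in(B_i)_+\subseteq X_+$, disjointness of the bands gives $\bignorm{\abs{P_iy_\alpha-P_iy_\beta}\wedge z}=\bignorm{\abs{y_\alpha-y_\beta}\wedge z}$, so $(P_iy_\alpha)_\alpha$ is Cauchy in $B_i^u$ and converges there to some $y^{(i)}$; set $y=(y^{(i)})_i\in X^u$. To check $y_\alpha\goesun y$, fix $x\in X_+$ and $\varepsilon>0$, choose a finite $F\subseteq I$ with $\norm{x-P_Fx}<\varepsilon$, and use that the summands of $x=\sum_{i\in F}P_ix+(x-P_Fx)$ are pairwise disjoint to get
\[
  \abs{y_\alpha-y}\wedge x\le\sum_{i\in F}\bigl(\abs{P_iy_\alpha-y^{(i)}}\wedge P_ix\bigr)+(x-P_Fx).
\]
Taking norms, the finite sum tends to $0$ because $P_iy_\alpha\goesun y^{(i)}$ in $B_i^u$ for each $i\in F$, while the last term has norm $<\varepsilon$; hence $\limsup_\alpha\bignorm{\abs{y_\alpha-y}\wedge x}\le\varepsilon$. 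Since $x\in X_+$ and $\varepsilon>0$ are arbitrary, $y_\alpha\goesun y$, so the un-topology on $X^u$ is complete.
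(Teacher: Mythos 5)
Your argument is correct and follows essentially the same route as the paper: reduce to the weak-unit case, where $X^u=L_0(\mu)$ and the induced un-topology is the (complete, metrizable) topology of convergence in measure, and then assemble the general case along a maximal disjoint family of bands with weak units. The only differences are cosmetic --- you realize the limit directly through the identification $X^u\cong\prod_{i}B_i^u$ rather than as a supremum of the band components, and you write out the final $\varepsilon$-estimate that the paper leaves as an exercise.
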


\begin{proof}
  The proof is similar to that of \cite[Theorem~6.4]{KMT}.
  Suppose first that $X$ has a weak unit. Then $X$ may be identified
  with an order dense ideal of $L_1(\mu)$ for some finite measure
  $\mu$; see \cite[Theorem~1.b.14]{Lindenstrauss:79} or
  \cite[Section~4]{GTX}. It follows that
  $X^u=L_1(\mu)^u=L_0(\mu)$. By Corollary~\ref{bfs-metriz}, the
  un-topology on $L_0(\mu)$ induced by $X$ is metrizable. Therefore,
  it suffices to show that it is sequentially
  complete. Theorem~\ref{bfs-measure} yields that this topology is the
  topology of convergence in measure, which is sequentially complete
  by \cite[Theorem~2.30]{Folland:99}.

  Now we consider the general case. Let $(y_\alpha)$ be a un-Cauchy
  net in~$X^u$. Without loss of generality, $y_\alpha\ge 0$; otherwise
  we separately consider the nets $(y_\alpha^+)$ and
  $(y_\alpha^-)$. By \cite[Proposition~1.a.9]{Lindenstrauss:79} (see
  also Section~4 in~\cite{KMT}), $X$ can be written as the closure of
  a direct sum of a family $\mathcal B$ of pair-wise disjoint
  principal bands.

  Fix $B\in\mathcal B$; let $\widetilde{B}$ be the band of $X^u$
  generated by~$B$. Being a band in~$X^u$, $\widetilde{B}$ is
  universally complete. Since $B$ is order dense in~$\widetilde{B}$,
  we may identify $\widetilde{B}$ with~$B^u$. Let
  $P_{\widetilde{B}}\colon X^u\to\widetilde{B}$ be the band projection
  for~$\widetilde{B}$. It is easy to see that the net
  $(P_{\widetilde{B}}y_\alpha)$ is un-Cauchy in $\widetilde B$ with
  respect to~$B$. Since $B$ has a weak unit, the first part of the
  proof yields that there exists $y_B\in\widetilde B$
  such that $P_{\widetilde{B}}y_\alpha$ un-converges to $y_B$ in
  $\widetilde B$ (and, therefore, in $X^u$) with respect to~$B$. It
  follows from Proposition~\ref{ops} that $y_B\ge 0$.

  Put $y=\sup\{y_B\mid B\in\mathcal B\}$. This supremum exists in
  $X^u$ because $X^u$ is universally complete. We claim that
  $P_{\widetilde{B}}y=y_B$ for each $B\in\mathcal B$. Indeed, let
  $\gamma$ be a finite subset of~$\mathcal B$. Define
  $z_\gamma=\bigvee_{B\in\gamma}y_B$. Then $(z_\gamma)$ may be viewed
  as a net, and $z_\gamma\uparrow y$. Let $B\in\mathcal B$. For every
  $\gamma$ such that $B\in\gamma$, the order continuity of
  $P_{\widetilde{B}}$ yields
  \begin{math}
    y_B=P_{\widetilde{B}}z_\gamma\uparrow P_{\widetilde{B}}y,
  \end{math}
  so that $P_{\widetilde{B}}y=y_B$ and, therefore,
  $P_{\widetilde{B}}y_\alpha\goesunB P_{\widetilde{B}}y$ for every~$B$.

  It is left to show that $y_\alpha\goesunX y$ in~$X^u$. The argument
  is similar to the proof of \cite[Theorem~4.12]{KMT} and we leave it
  as an exercise.
\end{proof}

\section{Un-compact intervals}

Let $X$ be a Banach lattice. It is well known that order intervals in
$X$ are norm compact iff $X$ is atomic and order continuous; see,
e.g., Theorem~6.2 in~\cite{Wnuk:99}. Since un-convergence on order
intervals of $X$ agrees with norm convergence, we immediately
conclude that order intervals in $X$ are un-compact iff $X$ is atomic
and order continuous. Recall that $[a,b]=a+[0,b-a]$ whenever $a\le b$;
therefore, when dealing with order intervals, it often suffices to
consider order intervals of the form $[0,u]$.

Suppose now that $X$ is an order dense ideal in a vector lattice $Y$
and consider order intervals in~$Y$. It is easy to see that they are
un-closed. Indeed, suppose that $y_\alpha\goesun y$ in $Y$ and
$u\in Y_+$ such that $0\le y_\alpha\le u$ for every $\alpha$; it
follows from Proposition~\ref{ops} that $0\le y\le u$.

\begin{proposition}\label{int-Xu}
  Let $X$ be a Banach lattice. TFAE:
  \begin{enumerate}
  \item\label{int-Xu-atom} $X$ is atomic and order continuous;
  \item\label{int-Xu-Xu} Order intervals in $X^u$ are un-compact.
  \end{enumerate}
\end{proposition}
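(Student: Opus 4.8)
The plan is to prove the two implications separately.

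(i)$\Rightarrow$(ii): Assume $X$ is atomic and order continuous; then $X$ is order complete, so by Example~\ref{Xu-atom} we may identify $X^u$ with $\mathbb R^A$, where $A$ is a maximal family of pairwise disjoint atoms of~$X$. Since $X$ is order continuous, Proposition~\ref{atom-uo-ptwise} together with the translation invariance of un-convergence shows that a net in $\mathbb R^A$ un-converges to $y$ if and only if it converges to $y$ coordinate-wise. Now fix $u\in(\mathbb R^A)_+$: the order interval $[0,u]$ of $X^u$ is $\prod_{a\in A}[0,u(a)]$, which is compact in the topology of coordinate-wise convergence by Tychonoff's theorem. Hence every net in $[0,u]$ has a subnet converging coordinate-wise, and therefore un-converging, to a point of $[0,u]$; thus $[0,u]$ is un-compact. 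Since $[a,b]=a+[0,b-a]$, every order interval of $X^u$ is un-compact.

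(ii)$\Rightarrow$(i): Here the un-topology on $X^u$ is the one induced by $X^\delta$ (which equals $X$ when $X$ is order complete); recall also that $X^\delta$ is order complete and hence an ideal in $X^u$, and that the restriction of this topology to $X$ is the native un-topology of~$X$. Fix $u\in X_+$. The crucial step is to show that the order interval $[0,u]_X$ of $X$ is un-closed in~$X^u$. So let $(x_\alpha)$ be a net in $[0,u]_X$ with $x_\alpha\goesun y$ in $X^u$. Since the order interval $[0,u]$ of $X^u$ is un-closed (Proposition~\ref{ops}), we get $0\le y\le u$; as $X^\delta$ is an ideal in $X^u$, this gives $y\in X^\delta$, and moreover $\abs{x_\alpha-y}\le u$. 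Testing un-convergence against $u\in(X^\delta)_+$ now yields $\bignorm{\abs{x_\alpha-y}}\to 0$ in the norm of~$X^\delta$. But $X$ is a Banach lattice, hence norm-closed in~$X^\delta$, so $y\in X$ and $y\in[0,u]_X$; this proves the claim.

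Granting the claim, $[0,u]_X$ is an un-closed subset of the un-compact interval $[0,u]$ of $X^u$ (un-compact by (ii)), hence itself un-compact in~$X^u$; the subspace topology it carries is the native un-topology of $X$ restricted to $[0,u]_X$, which agrees with the norm topology there. Therefore every order interval of $X$ is norm-compact, and so $X$ is atomic and order continuous by Theorem~6.2 of~\cite{Wnuk:99}. The main obstacle is exactly the claim that $[0,u]_X$ is un-closed in~$X^u$: since $[0,u]_X$ is in general a proper subset of the order interval $[0,u]$ of $X^u$, an un-limit of a net in $[0,u]_X$ could a priori land in $X^\delta\setminus X$, and it is the passage to norm convergence in $X^\delta$ — using $u$ itself as a test vector — together with the completeness of $X$ that rules this out.
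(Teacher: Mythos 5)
Your proof is correct and follows essentially the same route as the paper: the forward direction is identical (identify $X^u$ with $\mathbb{R}^A$, un-convergence with coordinate-wise convergence, and apply Tychonoff). In the backward direction the paper instead notes that order intervals of $X^\delta$ are order intervals of $X^u$, hence un-compact, hence norm-compact, and that intervals of $X$ are norm-closed subsets of these; your variant --- proving directly that $[0,u]_X$ is un-closed in $X^u$ by testing against $u$ itself and invoking the norm-completeness of $X$ inside $X^\delta$ --- is a correct reorganization of the same ingredients.
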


\begin{proof}
  \eqref{int-Xu-Xu}$\Rightarrow$\eqref{int-Xu-atom} Suppose that order
  intervals in $X^u$ are un-compact. Since $X^\delta$ is an ideal
  in~$X^u$, every order interval in $X^\delta$ is an order interval
  in~$X^u$.  Since the un-topology on $X^\delta$ is the restriction to
  $X^\delta$ of the un-topology on~$X^u$, order intervals in
  $X^\delta$ are un-compact and, therefore, norm compact. Furthermore,
  since $X$ is a closed sublattice of~$X^\delta$, order intervals of
  $X$ are closed subsets of order intervals of $X^\delta$ and,
  therefore, are compact in~$X$. Therefore, $X$ has compact intervals;
  it follows that $X$ is atomic and order continuous.

  \eqref{int-Xu-atom}$\Rightarrow$\eqref{int-Xu-Xu} As in
  Proposition~\ref{atom-uo-ptwise} and Example~\ref{Xu-atom}, we may
  assume that $X^u=\mathbb R^A$ for some set $A$ and the
  un-convergence on $\mathbb R^A$ agrees with the point-wise
  convergence. It is left to observe that order intervals in $\mathbb
  R^A$ are compact with respect to the topology of point-wise
  convergence. Indeed, given $u\in\mathbb R^A_+$, we have
  $[0,u]=\prod_{a\in A}\bigl[0,u(a)\bigr]$, Since each of the
  intervals $\bigl[0,u(a)\bigr]$ is compact in~$\mathbb R$, the set
  $[0,u]$ is compact by Tychonoff's Theorem.
\end{proof}

\begin{theorem}
  Let $X$ be a Banach lattice such that $X$ is an order dense ideal in a
  vector lattice~$Y$. Order intervals in $Y$ are un-compact iff $X$ is
  atomic and order continuous and $Y$ is order complete.
\end{theorem}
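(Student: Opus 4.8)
The strategy is to reduce to the point-wise picture on $\mathbb R^A$ used in Proposition~\ref{int-Xu} and to identify order completeness of $Y$ with the property of being an ideal in~$\mathbb R^A$.

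First I would dispose of the part of the $(\Rightarrow)$ implication concerning~$X$. Assume order intervals in $Y$ are un-compact. For $u\in X_+$, the order interval $[0,u]$ computed in $X$ equals the one computed in $Y$, because $X$ is an ideal in~$Y$; hence it is un-compact in $Y$ and, since the un-topology of $X$ is the restriction of the un-topology on $Y$ induced by~$X$, also un-compact in~$X$. By the criterion recalled at the beginning of this section, $X$ is atomic and order continuous. So in both directions of the theorem we may assume from the outset that $X$ is atomic and order continuous, and it remains to prove, under this assumption, that order intervals in $Y$ are un-compact iff $Y$ is order complete.

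To this end, recall that an atomic order continuous Banach lattice is order complete; so by Proposition~\ref{atom-uo-ptwise} and Example~\ref{Xu-atom} we may identify $X^u$ with $\mathbb R^A$, where $A$ is a maximal disjoint family of atoms of $X$, and the induced un-convergence on $\mathbb R^A$ is point-wise convergence. Since $X$ is order dense in $Y$ we get $Y^u=X^u=\mathbb R^A$, and $Y$ becomes an order dense sublattice of $\mathbb R^A$ with $\Span A\subseteq X\subseteq Y\subseteq\mathbb R^A$ (each atom $a\in A$ lies in~$X$). By Remark~\ref{un-gen}, the un-topology on $Y$ induced by $X$ is the restriction of the product topology of~$\mathbb R^A$. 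Now fix $u\in Y_+$. The order interval $[0,u]$ of $Y$ is $[0,u]_{\mathbb R^A}\cap Y$, where $[0,u]_{\mathbb R^A}=\prod_{a\in A}[0,u(a)]$ is point-wise compact by Tychonoff's theorem, and $[0,u]$ is point-wise dense in $[0,u]_{\mathbb R^A}$ since the truncations $\sum_{a\in F}v(a)a$ over finite $F\subseteq A$ lie in $\Span A\cap[0,u]$ and approximate any $v\in[0,u]_{\mathbb R^A}$. Hence $[0,u]$ is un-compact iff it is point-wise closed iff $[0,u]=[0,u]_{\mathbb R^A}$, so order intervals in $Y$ are un-compact iff $Y$ is an ideal in~$\mathbb R^A$.

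Finally I would show $Y$ is an ideal in $\mathbb R^A$ iff $Y$ is order complete. This is the Schaefer characterization of order complete atomic vector lattices, but the direct argument is short: if $S\subseteq Y$ is nonempty and bounded above by $u\in Y$, then $v:=\sup_{\mathbb R^A}S$ satisfies $\abs v\le\abs u+\abs{x_0}\in Y$ for any $x_0\in S$, so $v\in Y$ and $v=\sup_Y S$; conversely, if $0\le v\le u$ with $v\in\mathbb R^A$ and $u\in Y$, then $\{v(a)a\mid a\in A\}\subseteq\Span A$ is bounded above by $u$, so its supremum in $Y$ exists and, $Y$ being order dense in $\mathbb R^A$, coincides with the point-wise supremum~$v$, whence $v\in Y$. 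Putting the pieces together yields the theorem. The substantive step is the density/closedness dichotomy for $[0,u]$; the main thing to watch is the order of the argument — one must extract ``$X$ atomic and order continuous'' before passing to the $\mathbb R^A$-model and invoking Remark~\ref{un-gen}, which are available only under that hypothesis.
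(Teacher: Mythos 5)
Your proof is correct, and for the key remaining implication it takes a genuinely different route from the paper. Both arguments agree on the easy parts: extracting ``$X$ is atomic and order continuous'' from un-compactness of intervals of $X$, and deducing un-compactness from order completeness of $Y$ via $Y$ being an ideal in $X^u$ together with Proposition~\ref{int-Xu} (Tychonoff). Where you diverge is in proving that un-compact intervals force $Y$ to be order complete. The paper argues abstractly: given $0\le y_\alpha\uparrow\le u$, it extracts a un-convergent subnet from the un-compact interval $[0,u]$, notes that intervals are un-closed, and identifies the limit as the supremum via Proposition~\ref{un-monot}; this argument never leaves $Y$ and does not use the atomic model. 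You instead pass to the concrete picture $X\subseteq Y\subseteq\mathbb R^A$ and prove the sharper structural statement that $[0,u]_Y$ is point-wise dense in $\prod_{a\in A}[0,u(a)]$ (via finite truncations from $\Span A$), so that un-compactness forces $[0,u]_Y$ to be closed, hence equal to the full product interval, hence $Y$ is an ideal in $\mathbb R^A$, which you then correctly identify with order completeness using regularity of the order dense sublattice $Y$ in $\mathbb R^A$. Your route buys an explicit description of what un-compactness means coordinate-wise (and in particular unifies both directions into the single equivalence ``intervals un-compact iff $Y$ is an ideal in $\mathbb R^A$''), at the cost of leaning harder on the $\mathbb R^A$ identification and on Remark~\ref{un-gen}; the paper's subnet argument is shorter and more intrinsic. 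Your cautionary remark about establishing atomicity and order continuity of $X$ before invoking the $\mathbb R^A$ model is exactly the right point to watch, and you handle it correctly (order continuity of the norm already gives order completeness of $X$, so Remark~\ref{un-gen} applies).
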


\begin{proof}
  Suppose that $X$ is atomic and order continuous and $Y$ is order
  complete. By Proposition~\ref{int-Xu}, order intervals of $X^u$ are
  un-compact. Since $X$ is order dense in~$Y$, we have $Y^u=X^u$, so
  we may assume that $Y$ is a sublattice of~$X^u$. Since $Y$ is order
  complete, $Y$ is an ideal of~$X^u$. Therefore, order intervals of $Y$
  are also order intervals in $X^u$ and the un-topology on $Y$ is the
  restriction of the un-topology on $X^u$ to~$Y$. It follows that
  order intervals of $Y$ are un-compact.

  Conversely, suppose that order intervals of $Y$ are un-compact. It
  follows, in particular, that order intervals of $X$ are un-compact,
  hence norm compact and, therefore, $X$ is atomic and order
  continuous. To show that $Y$ is order complete, suppose that
  $0\le y_\alpha\uparrow\le u$ in~$Y$. Since $[0,u]$ is un-compact,
  there is a subnet $(z_\gamma)$ of $(y_\alpha)$ such that
  $z_\gamma\goesun z$ for some $z\in Y$. Since order intervals are
  un-closed, we have $z\in[0,u]$. Since the net $(y_\alpha)$ is
  increasing, so is $(z_\gamma)$ and, therefore, $z_\gamma\uparrow z$
  by Proposition~\ref{un-monot}. It follows that $y_\alpha\uparrow z$.
\end{proof}

\section{Spaces with a strong unit}

In this section, we consider the case when $X$ is a Banach lattice
with a strong unit, such that $X$ is an order dense ideal in a vector
lattice~$Y$. Cf.\ Example~\ref{linfty}.

We start by recalling some preliminaries.  Let $E$ be a vector lattice
and $e\in E_+$. For $x\in E$, we define
\begin{displaymath}
  \norm{x}_e=\inf\bigl\{\lambda>0\mid \abs{x}\le\lambda e\bigr\}.
\end{displaymath}
This expression is finite iff $x$ belongs to the principal ideal
$I_e$; it defines a lattice norm on~$I_e$. For a net $(y_\alpha)$ and
a vector $y$ in~$Y$, we say that $y_\alpha$ \term{converges to} $y$
\term{uniformly with respect to} $e$ if $\norm{y_\alpha-y}_e\to
0$. Note that this implies that $y_\alpha-y\in I_e$ for all
sufficiently large~$\alpha$. $E$ is said to
be \term{uniformly complete} if $\bigl(I_e,\norm{\cdot}_e\bigr)$ is
complete for every $e\in E_+$. Every $\sigma$-order
complete vector lattice and every Banach lattice is uniformly
complete; see \cite[\S 42]{Luxemburg:71} and \cite[Theorem~4.21]{Aliprantis:06}.
If $\bigl(I_e,\norm{\cdot}_e\bigr)$ is complete then it is
lattice isometric to $C(K)$ for some compact Hausdorff space $K$ with
$e$ corresponding to the constant one function~$\one$. In particular,
every Banach lattice with a strong unit $e$ is lattice isomorphic to
$C(K)$ with $e$ corresponding to~$\one$. We refer the reader to
Section~3.1 in~\cite{Abramovich:02} for further details. It was
observed in~\cite{KMT} that if $X$ is a Banach lattice with a strong
unit $e$ then un-convergence in $X$ agrees with norm convergence,
which, in turn, agrees with the uniform convergence with respect to~$e$.

\begin{proposition}\label{str-u}
  Let $X$ be a Banach lattice with a strong unit~$e$, such that $X$ is
  an order dense ideal in a vector lattice~$Y$. For a net $(y_\alpha)$
  in~$Y$, $y_\alpha\goesun 0$ iff $(y_\alpha)$ converges to zero
  uniformly with respect to $e$ (in particular, a tail of $(y_\alpha)$
  is contained in $X$).
\end{proposition}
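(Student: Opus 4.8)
The plan is to pass freely between the norm $\norm{\cdot}_X$ of $X$ and the gauge $\norm{\cdot}_e$ of the principal ideal $I_e$, exploiting crucially that $e$ turns out to be a weak unit of~$Y$. I begin with three preliminary remarks. Since $e$ is a strong unit of $X$ and $X$ is an ideal of $Y$ containing~$e$, the principal ideal of $e$ in $Y$ is exactly~$X$; thus $\norm{y}_e<\infty$ iff $y\in X$. Next, $\norm{\cdot}_X$ and $\norm{\cdot}_e$ are equivalent norms on~$X$: the bound $\norm{x}_X\le\norm{e}_X\norm{x}_e$ is immediate from the definition of $\norm{\cdot}_e$, and the reverse bound follows from the bounded inverse theorem, since both are complete lattice norms on~$X$ (equivalently, since $X$ is lattice isomorphic to $C(K)$ with $e$ corresponding to~$\one$, as recalled above). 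Finally, $e$, being a strong unit of~$X$, is a weak unit of~$X$; since $X$ is order dense in~$Y$, $e$ is a weak unit of~$Y$.

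The backward implication is then immediate: if $\norm{y_\alpha}_e\to 0$, then for all large~$\alpha$ we have $y_\alpha\in X$ and $\abs{y_\alpha}\le\norm{y_\alpha}_e\,e$, whence $\norm{y_\alpha}_X\le\norm{y_\alpha}_e\norm{e}_X\to 0$. As norm convergence in $X$ implies un-convergence in $X$, and un-convergence in $X$ is the restriction to $X$ of un-convergence in~$Y$, we conclude $y_\alpha\goesun 0$ in~$Y$.

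For the forward implication, assume $y_\alpha\goesun 0$. Taking $x=e$ in the definition gives $\norm{\abs{y_\alpha}\wedge e}_X\to 0$, hence $\delta_\alpha:=\norm{\abs{y_\alpha}\wedge e}_e\to 0$ by the norm equivalence, and by the Archimedean property $\abs{y_\alpha}\wedge e\le\delta_\alpha e$. The heart of the argument is the following lemma: \emph{if $e$ is a weak unit of an Archimedean vector lattice, $0\le w$, $0<\varepsilon<1$, and $w\wedge e\le\varepsilon e$, then $w\le\varepsilon e$.} Granting it and applying it with $w=\abs{y_\alpha}$ and $\varepsilon=\delta_\alpha$ along the tail where $\delta_\alpha<1$ (when $\delta_\alpha=0$ we have $\abs{y_\alpha}\wedge e=0$, so $y_\alpha=0$ by the weak-unit property), we obtain $\abs{y_\alpha}\le\delta_\alpha e$; in particular a tail of $(y_\alpha)$ lies in $X=I_e$ and $\norm{y_\alpha}_e\le\delta_\alpha\to 0$, as desired.

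It remains to prove the lemma, which is the only real obstacle. From $w\wedge e\le\varepsilon e\le e$ one gets $w\wedge\varepsilon e=w\wedge e=:v$, so that $p:=(w-\varepsilon e)^+=w-v=(w-e)^+$. Then $p$ is disjoint from $(w-\varepsilon e)^-=\varepsilon e-v$ and from $(w-e)^-=e-v$, both of which are positive; hence $p$ is disjoint from their sum, and therefore from $(e-v)-(\varepsilon e-v)=(1-\varepsilon)e$, and so from~$e$. Since $p\ge 0$ and $e$ is a weak unit, $p=0$, i.e.\ $w\le\varepsilon e$. I expect the only delicate points to be this lattice manipulation (the identity $w\wedge\varepsilon e=w\wedge e$ and the disjointness bookkeeping) and the observation that order density of $X$ in $Y$ is precisely what makes $e$ a weak unit of~$Y$ — without it the forward implication fails, as the example $Y=\mathbb R^2$, $X=\mathbb R e$ with $e=(1,0)$ shows.
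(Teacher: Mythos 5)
Your proof is correct, and it takes a genuinely more direct route than the paper's. The paper first treats the special case where $Y$ is uniformly complete (identifying $X$ with the principal ideal $I_e$ of~$Y$, invoking Proposition~\ref{qip} and the equivalence of $\norm{\cdot}_X$ with $\norm{\cdot}_e$), and then reduces the general case to $Y^\delta$ via Proposition~\ref{deltas}. You bypass the reduction entirely: the identification of $X$ with the principal ideal of $e$ computed in $Y$, and the completeness of $(X,\norm{\cdot}_e)$, need nothing from $Y$ beyond $X$ being an ideal, since a Banach lattice is already uniformly complete. More substantively, the crux of both arguments is that for a weak unit~$e$, the inequality $w\wedge e\le\varepsilon e$ with $0<\varepsilon<1$ forces $w\le\varepsilon e$; the paper asserts the corresponding step (``if $\norm{\abs{y_\alpha}\wedge e}_e<1$ then $\abs{y_\alpha}\le e$'') without justification, while your disjointness lemma --- $(w-\varepsilon e)^+=(w-e)^+$ is disjoint from $(1-\varepsilon)e$, hence from $e$, hence zero --- supplies a clean elementary proof, together with the correct diagnosis that order density of $X$ in $Y$ enters precisely to make $e$ a weak unit of~$Y$. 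Two small remarks: the disjointness of $p$ from $(1-\varepsilon)e$ already follows from $p\perp(e-v)$ and $0\le(1-\varepsilon)e\le e-v$, so the detour through the sum is unnecessary (though harmless); and your argument yields the slightly sharper quantitative conclusion $\norm{y_\alpha}_e\le\norm{\abs{y_\alpha}\wedge e}_e$ on the tail where the right-hand side is below~$1$.
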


\begin{proof}
  First, we consider the special case when $Y$ is uniformly
  complete. Note that $X$ equals the principal ideal generated by $e$
  in~$Y$. It follows that the original norm of $X$ is equivalent to
  $\norm{\cdot}_e$. By Proposition~\ref{qip}, $y_\alpha\goesunX 0$ iff
  $\norm{\abs{y_\alpha}\wedge e}_X\to 0$, which is equivalent to
  $\norm{\abs{y_\alpha}\wedge e}_e\to 0$. Note that if
  $\norm{\abs{y_\alpha}\wedge e}_e<1$ then $\abs{y_\alpha}\le e$, and,
  therefore, $y_\alpha\in X$ and
  $\norm{\abs{y_\alpha}\wedge e}_e=\norm{y_\alpha}_e$. It is now easy
  to see that $\norm{\abs{y_\alpha}\wedge e}_e\to 0$ iff $y_\alpha$
  converges to $0$ uniformly with respect to~$e$.

  Now we consider the general case. By Proposition~\ref{deltas},
  $y_\alpha\goesunX 0$ in $Y$ iff $y_\alpha\goesunXd 0$ in
  $Y^\delta$. Observe that $Y^\delta$ is order complete, hence
  uniformly complete. Furthermore, $e$ is a strong unit
  in~$X^\delta$. By the special case, $y_\alpha\goesunXd 0$ in
  $Y^\delta$ iff a tail of $(y_\alpha)$ is contained in $X^\delta$ and
  converges to zero uniformly with respect to~$e$. It follows that
  $\abs{y_\alpha}\le e$ for all sufficiently large~$\alpha$. Since $X$
  is an ideal in~$Y$, we conclude that $y_\alpha\in X$.
\end{proof}

Suppose that $Y$ is a uniformly complete vector lattice and $e\in Y_+$
is a weak unit. Put $X=\bigl(I_e,\norm{\cdot}_e\bigr)$. Then $X$ is a
Banach lattice with a strong unit, and an order dense ideal in
$Y$. Consider the un-topology induced by $X$ on~$Y$. The
following result is an immediate corollary of Proposition~\ref{str-u}.

\begin{corollary}
  Let $Y$ be a uniformly complete vector lattice, $e\in Y_+$ a weak
  unit, and $X=\bigl(I_e,\norm{\cdot}_e\bigr)$. For a net $(y_\alpha)$
  in~$Y$, $y_\alpha\goesunX 0$ iff $y_\alpha$ converges to zero uniformly with
  respect to~$e$.
\end{corollary}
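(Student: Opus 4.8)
The plan is to derive this as a direct corollary of Proposition~\ref{str-u}, so almost no work is needed. First I would verify the hypotheses of that proposition are met. Since $Y$ is uniformly complete and $e\in Y_+$, the principal ideal $I_e$ equipped with $\norm{\cdot}_e$ is complete, hence $X=\bigl(I_e,\norm{\cdot}_e\bigr)$ is a Banach lattice; moreover $e$ is a strong unit of $X$ by construction, since $\abs{x}\le\norm{x}_e\cdot e$ for every $x\in I_e$. Next I would check that $X$ is an order dense ideal in $Y$: it is an ideal because any principal ideal $I_e$ is an ideal, and it is order dense because $e$ is assumed to be a weak unit of $Y$, which gives $\sup_n\bigl(u\wedge ne\bigr)=u$ for every $u\in Y_+$ (as $u-u\wedge ne\downarrow$ to a vector disjoint from $e$, hence zero), and each $u\wedge ne$ lies in $I_e=X$ and is positive when $u\ne 0$; thus $X$ is order dense.

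With the hypotheses verified, Proposition~\ref{str-u} applies verbatim and yields that for a net $(y_\alpha)$ in $Y$, $y_\alpha\goesunX 0$ if and only if $(y_\alpha)$ converges to zero uniformly with respect to $e$, which is exactly the claim. I would phrase the proof as: ``The space $X$ is a Banach lattice with strong unit $e$, and $X$ is an order dense ideal in $Y$ since $e$ is a weak unit. The conclusion is now immediate from Proposition~\ref{str-u}.'' No step here is a genuine obstacle; the only thing one must be slightly careful about is confirming order density of $I_e$ from the weak-unit hypothesis, but this is a standard one-line argument using Archimedeanity and the definition of weak unit. Hence the corollary is essentially a restatement of the proposition in the special case $X=I_e$.
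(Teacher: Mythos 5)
Your proposal is correct and is exactly the paper's route: the paper derives this statement as an immediate consequence of Proposition~\ref{str-u}, after noting (as you do) that uniform completeness of $Y$ makes $X=\bigl(I_e,\norm{\cdot}_e\bigr)$ a Banach lattice with strong unit $e$ and that the weak-unit hypothesis makes $X$ an order dense ideal in $Y$. One small simplification: order density needs only that $0<u\wedge e\le u$ for every nonzero $u\in Y_+$ (immediate from $e$ being a weak unit), so the supremum argument with $u\wedge ne$ is not required.
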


Let $X$ be a Banach lattice with a strong unit~$e$. As in
Remark~\ref{noncompl}, we consider the un-topology on $X^u$ induced
by~$X^\delta$. Note that $X^u=(X^\delta)^u$. By
Proposition~\ref{str-u}, for a net $(y_\alpha)$ in~$X^u$,
$y_\alpha\goesun 0$ iff a tail of $(y_\alpha)$ is contained in
$X^\delta$ and converges to zero uniformly with respect to~$e$.

Since $X$ is a Banach lattice with a strong unit, up to a lattice
isomorphism we may identify $X$ with $C(K)$ for some compact
space~$K$. Furthermore, $X^\delta$ is an order complete Banach lattice
with strong unit, so we can identify it with $C(Q)$ for some
extremally disconnected compact space~$Q$. Since $X^u=(X^\delta)^u$,
by \cite[Theorem~7.29]{Aliprantis:03}, we can identify $X^u$ with
$C_\infty(Q)$. Therefore, for a net $(f_\alpha)$ in $C_\infty(Q)$,
$f_\alpha\goesun 0$ iff a tail of $(f_\alpha)$ is contained in $C(Q)$
and converges to zero uniformly on~$Q$.

\section{Un-convergence versus uo-convergence}

Let $Y$ be a vector lattice. Recall that a net $(y_\alpha)$ in $Y$ is
said to \term{converge in order} to $y$ if there exists a net
$(z_\gamma)$ in~$Y$, which may, generally, have a different index set,
such that $z_\gamma\downarrow 0$ and
\begin{math}
  \forall\gamma\ \exists\alpha_0\ \forall\alpha\ge\alpha_0\
  \abs{y_\alpha-y}\le z_\gamma.
\end{math}
In this case, we write $y_\alpha\goeso y$.  We say that $y_\alpha$
\term{uo-converges} to $y$ and write $y_\alpha\goesuo y$ if
$\abs{y_\alpha-y}\wedge u\goeso 0$ for every $u\in Y_+$. We refer the
reader to~\cite{GTX} for a review of order and uo-convergence. We will
only mention three facts here. First, it was observed in
\cite[Corollary~3.6]{GTX} that every disjoint sequence is
uo-null. Second, for sequences in $L_0(\mu)$, uo-convergence coincides
with almost everywhere (a.e.) convergence. For the third fact, we need
the concept of a regular sublattice.  Recall that a sublattice $Z$ of
$Y$ is \term{regular} if $z_\alpha\downarrow 0$ in $Z$ implies
$z_\alpha\downarrow 0$ in~$Y$. In this case, Theorem~3.2 of~\cite{GTX}
asserts that $z_\alpha\goesuo 0$ in $Z$ iff $z_\alpha\goesuo 0$ in $Y$
for every net $(z_\alpha)$ in~$Z$.  In particular, if $Y$ is a regular
sublattice of $L_0(\mu)$ then uo-convergence coincides with a.e.\
convergence for sequences in~$Y$. Therefore, uo-convergence may be
viewed as a generalization of a.e.\ convergence.

Suppose, as before, that $X$ is a normed lattice which is an ideal
in~$Y$. Our goal is to compare the un-convergence on $Y$ induced by
$X$ with the uo-convergence on~$Y$.

The following is an extension of Proposition~3.5 in~\cite{KMT}.

\begin{proposition}\label{disj}
  The following are equivalent.
  \begin{enumerate}
  \item\label{disj-oc} $X$ is order continuous;
  \item\label{disj-seq} Every disjoint sequence in $Y$ is un-null;
  \item\label{disj-net} Every disjoint net in $Y$ is un-null. 
  \end{enumerate}
\end{proposition}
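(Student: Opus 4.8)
The plan is to prove the chain of implications $\eqref{disj-net}\Rightarrow\eqref{disj-seq}\Rightarrow\eqref{disj-oc}\Rightarrow\eqref{disj-net}$, reducing wherever possible to the corresponding statement on $X$ itself (Proposition~3.5 in~\cite{KMT}) and to the basic tools already available, especially Proposition~\ref{dense} and the machinery of $X^\delta$.

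First I would observe that $\eqref{disj-net}\Rightarrow\eqref{disj-seq}$ is trivial, since a disjoint sequence is in particular a disjoint net. For $\eqref{disj-seq}\Rightarrow\eqref{disj-oc}$, suppose $X$ is not order continuous; then $X$ contains a disjoint order-bounded sequence $(x_n)$ that is not norm null. Viewing $(x_n)$ as a disjoint sequence in $Y$, assumption~\eqref{disj-seq} gives $x_n\goesunX 0$; but since $(x_n)$ is order bounded in $X$ and un-convergence restricted to $X$ is the native un-convergence, and un-convergence on order intervals of a Banach lattice agrees with norm convergence (as recalled in Section~7), we get $\norm{x_n}\to 0$, a contradiction. (If $X$ is only a normed lattice one can instead invoke the characterization of order continuity via disjoint bounded sequences being norm null directly.)

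The main work is $\eqref{disj-oc}\Rightarrow\eqref{disj-net}$. Here I would let $(y_\alpha)$ be a disjoint net in $Y$ and fix $x\in X_+$ and $\varepsilon>0$; the goal is to find a tail on which $\norm{\abs{y_\alpha}\wedge x}<\varepsilon$. The natural move is to work inside the principal ideal $I_x$ and its closure: the vectors $\abs{y_\alpha}\wedge x$ form a disjoint net in $I_x\subseteq X$, bounded above by $x$, hence a disjoint order-bounded net in $X$. Since $X$ is order continuous, a disjoint order-bounded net in $X$ is norm null (this is exactly the net version of the defining property of order continuity, and is the content of the implication for $X=Y$ in \cite[Proposition~3.5]{KMT}), so $\norm{\abs{y_\alpha}\wedge x}\to 0$. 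As this holds for every $x\in X_+$, we conclude $y_\alpha\goesunX 0$. I expect the one point requiring a little care — and the main obstacle — is justifying that a disjoint \emph{net} (not just sequence) which is order bounded in an order continuous Banach lattice is norm null; this follows because in an order continuous Banach lattice the solid hull of a disjoint net bounded by $u$ consists of vectors of the form $\sum$ over finite subnets which decrease appropriately, or more cleanly because order continuity means order-bounded disjoint nets converge in order to $0$ and order-null order-bounded nets are norm null by order continuity. One should cite \cite[Proposition~3.5]{KMT} or the standard fact (e.g.\ \cite[Corollary~2.4.3]{Meyer-Nieberg:91} together with the net formulation) rather than reprove it.

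Finally, to handle the case where $X$ is merely a normed lattice rather than a Banach lattice, I would note that Proposition~\ref{deltas} lets us pass to $X^\delta\subseteq Y^\delta$, and that $X$ is order continuous iff $X^\delta$ is; alternatively one checks that every step above uses only that order-bounded disjoint nets in $X$ are norm null, which is the definition of order continuity and needs no completeness. With the three implications assembled, the equivalence follows.
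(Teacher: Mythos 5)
Your argument is correct, but it takes a mildly different route from the paper's. The paper proves \eqref{disj-oc}$\Rightarrow$\eqref{disj-seq} by working in $Y$ with uo-convergence: a disjoint sequence in $Y$ is uo-null by \cite[Corollary~3.6]{GTX}, so $\abs{y_n}\wedge x\goeso 0$ for each $x\in X_+$, and order continuity of $X$ upgrades this to norm convergence; the implications \eqref{disj-seq}$\Rightarrow$\eqref{disj-oc} and \eqref{disj-seq}$\Leftrightarrow$\eqref{disj-net} are then dismissed as straightforward with a pointer to \cite[Proposition~3.5]{KMT}. You avoid uo-convergence in $Y$ entirely and instead prove \eqref{disj-oc}$\Rightarrow$\eqref{disj-net} directly, by noting that $(\abs{y_\alpha}\wedge x)$ is an order bounded disjoint net in $X$ and quoting the $X=Y$ case. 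What your route buys is that the net statement comes out in one step, with no sequence-to-net extraction and no input from uo-theory on $Y$; what the paper's route buys is that its only nontrivial ingredient is a sequence fact, whereas you need the net version of the disjointness criterion in $X$, which is precisely the point you flag as delicate.

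On that point, your two suggested justifications are uneven: the ``solid hull / finite subnets'' remark is not an argument, and ``order continuity means order-bounded disjoint nets converge in order to $0$'' is not the definition of order continuity. The claim you need is nevertheless true and has two clean proofs: either argue by contradiction and extract a sequence (if $\bignorm{\abs{y_\alpha}\wedge x}\ge\varepsilon$ on a cofinal set of indices, choose infinitely many distinct such indices to obtain an order bounded disjoint sequence in $X$ that is not norm null, contradicting \cite[Theorem~2.4.2]{Meyer-Nieberg:91}), or use that an order continuous Banach lattice is order complete, so that $z_\alpha:=\sup_{\beta\ge\alpha}\bigl(\abs{y_\beta}\wedge x\bigr)$ exists and decreases to $0$ by disjointness, whence $\bignorm{\abs{y_\alpha}\wedge x}\le\norm{z_\alpha}\to 0$. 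Finally, both your \eqref{disj-seq}$\Rightarrow$\eqref{disj-oc} and the paper's rely on the Banach-lattice characterization of order continuity via order bounded disjoint sequences; your parenthetical fix for merely normed $X$ is circular as stated (the ``characterization'' you would invoke is exactly that theorem, whose standard proof uses completeness), but since the paper is no more careful here, I would not count this as a gap relative to the paper.
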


\begin{proof}
  To prove \eqref{disj-oc}$\Rightarrow$\eqref{disj-seq}, observe that
  if $(y_n)$ is a disjoint sequence in $Y$ then it is uo-null in~$Y$.
  In particular, for every $x\in X_+$,
  the sequence $\abs{y_n}\wedge x$ converges to zero in order and,
  therefore, in norm. The proof that
  \eqref{disj-oc}$\Leftarrow$\eqref{disj-seq}$\Leftrightarrow$\eqref{disj-net}
  is straightforward, cf.\ \cite[Proposition~3.5]{KMT}.
\end{proof}

\begin{proposition}\label{uo-un}
  The following are equivalent.
  \begin{enumerate}
  \item\label{uo-un-ocn} $X$ is order continuous;
  \item\label{uo-un-X} $x_\alpha\goesuo 0$ in $X$ implies
    $x_\alpha\goesun 0$ in $X$ for every net $(x_\alpha)$ in~$X$;
  \item\label{uo-un-Y} $y_\alpha\goesuo 0$ in $Y$ implies
    $y_\alpha\goesun 0$ in $Y$ for every net $(y_\alpha)$ in~$Y$.
  \end{enumerate}
\end{proposition}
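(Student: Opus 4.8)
The plan is to prove the cycle $\eqref{uo-un-ocn}\Rightarrow\eqref{uo-un-Y}\Rightarrow\eqref{uo-un-X}\Rightarrow\eqref{uo-un-ocn}$, using the two easier implications first.
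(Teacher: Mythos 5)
What you have written is a route map, not a proof: you state that you will establish the cycle \eqref{uo-un-ocn}$\Rightarrow$\eqref{uo-un-Y}$\Rightarrow$\eqref{uo-un-X}$\Rightarrow$\eqref{uo-un-ocn}, but you supply no argument for any of the three implications, so every substantive step is missing. The cycle you chose is in fact the same one the paper uses, so the plan itself is sound; what you must still provide is the following content. First, the transfer device underlying both \eqref{uo-un-ocn}$\Rightarrow$\eqref{uo-un-Y} and \eqref{uo-un-Y}$\Rightarrow$\eqref{uo-un-X}: since $X$ is an ideal in~$Y$ it is a regular sublattice, so by \cite[Theorem~3.2]{GTX} a net in $X$ is uo-null in $X$ iff it is uo-null in~$Y$; without this, \eqref{uo-un-Y}$\Rightarrow$\eqref{uo-un-X} does not even get started. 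Second, for \eqref{uo-un-ocn}$\Rightarrow$\eqref{uo-un-Y} you must argue that if $y_\alpha\goesuo 0$ in $Y$ and $x\in X_+$, then $\abs{y_\alpha}\wedge x\goeso 0$ \emph{in $X$} (the net lies in the order interval $[0,x]\subseteq X$, and order convergence of a dominated net passes from $Y$ to the ideal $X$), so that order continuity of the norm yields $\bignorm{\abs{y_\alpha}\wedge x}\to 0$, i.e.\ $y_\alpha\goesun 0$.

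Third, and this is the only implication requiring a genuinely new idea, \eqref{uo-un-X}$\Rightarrow$\eqref{uo-un-ocn}: the paper deduces it from Proposition~\ref{disj} applied with $X=Y$, using that every disjoint sequence in $X$ is uo-null (so un-null by hypothesis), and that un-nullity of all disjoint sequences characterizes order continuity. Alternatively you could argue directly: if $x_\alpha\downarrow 0$ in~$X$, then $x_\alpha\goeso 0$, hence $x_\alpha\goesuo 0$, so by \eqref{uo-un-X} $x_\alpha\goesun 0$; fixing any index $\alpha_0$ and noting $0\le x_\alpha\le x_{\alpha_0}$ for $\alpha\ge\alpha_0$ gives $\norm{x_\alpha}=\bignorm{x_\alpha\wedge x_{\alpha_0}}\to 0$, which is order continuity. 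Either version is fine, but as it stands your submission contains none of these arguments, so it cannot be accepted as a proof.
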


\begin{proof}
  Note that being an ideal in~$Y$, $X$ is a regular sublattice and,
  therefore, $x_\alpha\goesuo 0$ in $X$
  iff $x_\alpha\goesuo 0$ in $Y$ for every net $(x_\alpha)$ in~$X$.

  \eqref{uo-un-ocn}$\Rightarrow$\eqref{uo-un-Y} Suppose that
  $y_\alpha\goesuo 0$ in~$Y$. Fix $x\in X_+$. Then $\abs{y_\alpha}\wedge
  x\goeso 0$ in~$X$, hence $\abs{y_\alpha}\wedge x\goesnorm 0$ and,
  therefore, $y_\alpha\goesun 0$.

  \eqref{uo-un-Y}$\Rightarrow$\eqref{uo-un-X} Suppose $x_\alpha\goesuo
  0$ in~$X$. Then $x_\alpha\goesuo 0$ in $Y$ and, therefore,
  $x_\alpha\goesun 0$ in $Y$ and in~$X$.

  \eqref{uo-un-X}$\Rightarrow$\eqref{uo-un-ocn} We will apply
  Proposition~\ref{disj} with $X=Y$. Every disjoint sequence in $X$ is
  uo-null, hence it is un-null by the assumption.
\end{proof}

Recall the following standard fact; see, e.g., \cite[Theorem~2.30]{Folland:99}.

\begin{theorem}\label{Fol}
  Let $\mu$ be a finite measure. Every sequence in $L_0(\mu)$ which
  converges in measure has a subsequence which converges a.e.
\end{theorem}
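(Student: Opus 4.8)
The plan is to prove this classical fact using a Borel--Cantelli type argument combined with a diagonal extraction. Let $(f_n)$ be a sequence in $L_0(\mu)$ with $f_n\goesmu f$ for some $f\in L_0(\mu)$. Since $f_n\to f$ in measure, for each $k\in\mathbb N$ there exists $n_k$ such that $\mu\bigl(\{\abs{f_{n_k}-f}>2^{-k}\}\bigr)<2^{-k}$; we may clearly choose $n_1<n_2<\cdots$. Put $E_k=\{\abs{f_{n_k}-f}>2^{-k}\}$. Then $\sum_k\mu(E_k)<\infty$, so by the Borel--Cantelli lemma $\mu\bigl(\limsup_k E_k\bigr)=0$.

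The key step is to observe that off the null set $N=\limsup_k E_k=\bigcap_m\bigcup_{k\ge m}E_k$ we have pointwise convergence. Indeed, if $\omega\notin N$ then there is $m$ with $\omega\notin E_k$ for all $k\ge m$, i.e., $\abs{f_{n_k}(\omega)-f(\omega)}\le 2^{-k}$ for all $k\ge m$, which forces $f_{n_k}(\omega)\to f(\omega)$. Hence $f_{n_k}\to f$ a.e., as required.

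I do not anticipate a serious obstacle here; the only point requiring a little care is the choice of the fast-decaying bound $2^{-k}$ and checking that the summability of $\mu(E_k)$ feeds correctly into Borel--Cantelli, but this is entirely routine. One could alternatively replace $2^{-k}$ by any summable sequence $\varepsilon_k\downarrow 0$ with $\sum_k\mu\bigl(\{\abs{f_{n_k}-f}>\varepsilon_k\}\bigr)<\infty$; the argument is identical. Since the statement is standard, I would in practice simply cite \cite[Theorem~2.30]{Folland:99} rather than reproduce the proof, as the excerpt already does.
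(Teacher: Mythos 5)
Your argument is correct: the choice of a subsequence with $\mu\bigl(\{\abs{f_{n_k}-f}>2^{-k}\}\bigr)<2^{-k}$ followed by Borel--Cantelli is exactly the standard proof of this fact, and it is the same argument given in the reference \cite[Theorem~2.30]{Folland:99} that the paper cites instead of providing its own proof. Nothing is missing; citing Folland, as you suggest and as the paper does, is entirely appropriate here.
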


It is a natural question whether this result can be generalized with
a.e.\ convergence and convergence in measure replaced with uo- and
un-convergences, respectively. A partial advance in this direction
was made in Proposition~4.1 of~\cite{DOT}:

\begin{proposition}\cite{DOT}
\label{un-uo-BL}
  If $X$ is a Banach lattice and $x_n\goesun 0$ in $X$ then
  $x_{n_k}\goesuo 0$ for some subsequence $(x_{n_k})$.
\end{proposition}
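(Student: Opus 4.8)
The plan is to reduce the statement to a diagonal/exhaustion argument over an increasing sequence of positive vectors, much as in the proof of Theorem~\ref{Fol} but inside an abstract Banach lattice. First I would recall that since $x_n\goesun 0$ in $X$, we have $\abs{x_n}\wedge u\goesnorm 0$ for every $u\in X_+$; by Proposition~\ref{un-uo-BL}-style reasoning (but we cannot use it circularly — rather we reprove it) we want a.e.-type control. The natural first reduction is: replacing $(x_n)$ by $(\abs{x_n})$, we may assume $x_n\ge 0$; and by passing to a subsequence we may assume $\sum_n\norm{x_n\wedge u}<\infty$ for a fixed $u$. The key point is to choose $u$ well. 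If $X$ has a weak unit we take $u$ to be a quasi-interior point (after passing to the order-continuous part, which is where we expect to work — but $X$ is a general Banach lattice here, so we must be careful).

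The main structural step is to localize to a principal band with a weak unit. Since $X$ is a Banach lattice, by \cite[Proposition~1.a.9]{Lindenstrauss:79} (used elsewhere in this paper) $X$ is the closed direct sum of a family $\mathcal B$ of pairwise disjoint principal bands $B_i=B_{u_i}$. For each fixed $i$, the band $B_i$ has weak unit $u_i$ and is a Banach lattice, so it embeds as an order dense ideal in some $L_1(\nu_i)$ with $\nu_i$ finite; un-convergence in $B_i$ then agrees with convergence in measure in $L_1(\nu_i)$ (Example~\ref{Lp}), and $P_{B_i}x_n\goesun 0$ in $B_i$, hence converges in $\nu_i$-measure, hence has a subsequence converging $\nu_i$-a.e., i.e. uo-null in $B_i$ (using that uo-convergence of sequences in $L_1(\nu_i)$ is a.e.\ convergence). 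The hard part will be the diagonalization across the possibly uncountably many bands: a priori one needs a single subsequence working simultaneously for all $B_i$, and $\mathcal B$ need not be countable. The standard fix is that a fixed sequence $(x_n)$ ``lives'' (up to norm-small error) on a countable subfamily — precisely, for each $k$ pick a finite $\gamma_k\subseteq\mathcal B$ with $\bignorm{x_k-\sum_{B\in\gamma_k}P_Bx_k}<2^{-k}$, set $\mathcal B_0=\bigcup_k\gamma_k$, which is countable, and let $v=\sum_{B\in\mathcal B_0}$ (a suitably normalized sum of the $u_i$'s) be a quasi-interior point of the band $\widetilde B_0$ generated by $\mathcal B_0$. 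The ``tails'' of $x_n$ outside $\widetilde B_0$ are then norm-summable hence form a disjoint-ish null piece; after a further subsequence they are uo-null by \cite[Corollary~3.6]{GTX}-type summability.

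So the steps, in order: (1) reduce to $x_n\ge 0$; (2) produce the countable subfamily $\mathcal B_0$ and decompose $x_n=P_{\widetilde B_0}x_n+r_n$ with $\sum\norm{r_n}<\infty$ after a subsequence, so $r_n\goesuo 0$ (a norm-summable, hence order-convergent-to-zero-in-$X$ after yet another subsequence, sequence is uo-null); (3) inside $\widetilde B_0$, which is a Banach lattice with weak unit, represent it as an order dense ideal of $L_1(\mu)$, $\mu$ finite, observe $P_{\widetilde B_0}x_n\goesun 0$ there, apply Example~\ref{Lp} to get convergence in measure, apply Theorem~\ref{Fol} to extract an a.e.-convergent subsequence, and translate a.e.\ convergence in $L_1(\mu)$ into uo-convergence in $\widetilde B_0$ (sequences in a regular sublattice of $L_0(\mu)$: uo $=$ a.e.); (4) since $\widetilde B_0$ is a band in $X$, hence a regular sublattice, uo-null in $\widetilde B_0$ gives uo-null in $X$ by \cite[Theorem~3.2]{GTX}; (5) combine: $x_{n_k}=P_{\widetilde B_0}x_{n_k}+r_{n_k}$, both terms uo-null in $X$, and uo-convergence is linear, so $x_{n_k}\goesuo 0$.

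I expect the genuine obstacle to be step (2)–(4) glue: making precise that the ``remainder'' $r_n$ is uo-null and that the band $\widetilde B_0$ really has a \emph{weak} unit in $X$ (it does: $v>0$ and $v\wedge w=0$, $w\in\widetilde B_0$, forces $w=0$ since $v$ dominates each $u_i$ on its own band). One must also be slightly careful that ``convergence in measure'' from Example~\ref{Lp} is stated for $L_p(\mu)\subseteq L_0(\mu)$ with the \emph{native} un-convergence, whereas here $\widetilde B_0$ may be a proper order dense ideal of $L_1(\mu)$; but $\widetilde B_0$ being order dense and order continuous (as a band with weak unit in a Banach lattice it need not be order continuous!) — actually $\widetilde B_0$ need not be order continuous, so one instead uses Proposition~\ref{qip} with the quasi-interior point $v$ of $\widetilde B_0$ to reduce un-convergence to $\norm{x_n\wedge v}\to0$, then works in the principal ideal $I_v$, which \emph{is} order-dense in $\widetilde B_0$, and in $L_1$-language $v$ corresponds to (a multiple of) $\one$, so $\norm{x_n\wedge v}_{L_1}\to 0$ is exactly convergence in $\mu$-measure on a finite measure space. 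That is the delicate bookkeeping; once it is in place, the extraction and the linearity of uo-convergence finish the argument routinely.
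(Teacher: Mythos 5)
Your argument has a genuine gap: it silently assumes order continuity (indeed order completeness) of $X$ at several essential points, while the proposition is stated for an arbitrary Banach lattice. First, the decomposition of $X$ into the closure of a direct sum of pairwise disjoint principal bands with band projections $P_B$ (your step (2)) comes from \cite[Proposition~1.a.9]{Lindenstrauss:79} and is invoked in this paper only for order continuous lattices; in a general Banach lattice bands need not be projection bands at all (e.g.\ in $C[0,1]$), so $P_Bx_k$ and $P_{\widetilde B_0}$ are undefined. Second, and more fundamentally, your step (3) represents the band $\widetilde B_0$ (a Banach lattice with a weak unit) as an order dense ideal of $L_1(\mu)$ and identifies un-convergence with convergence in measure; both facts require order continuity (\cite[Theorem~1.b.14]{Lindenstrauss:79}, Example~\ref{Lp}, Theorem~\ref{bfs-measure}). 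They fail in general: every ideal of $L_1(\mu)$ is order complete, so a non-order-complete band (again $C[0,1]$, which is the principal band generated by $\one$ in itself) admits no such embedding, and your patch via Proposition~\ref{qip} does not help, because it still presupposes an $L_1$-picture in which smallness of $\norm{x_n\wedge v}_X$ can be read as smallness in measure; in $\ell_\infty$, for instance, un-convergence is norm convergence, not convergence in measure. (Your claim that $v$ is a quasi-interior point of $\widetilde B_0$ also uses that the closed ideal generated by the $u_i$ equals the band they generate --- again an order-continuity fact.) What your outline does establish, once written out for order continuous $X$, is essentially Theorem~\ref{un-uo-wu} with $Y=X$ and the weak-unit hypothesis removed via your countable-band reduction; that is correct and worthwhile, but it covers a strictly smaller class of spaces than the proposition claims.

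For the record, the paper does not reprove this statement --- it is quoted from \cite{DOT} --- and the proof there is order-theoretic and avoids measure representations entirely, which is exactly what lets it work without order continuity: assume $x_n\ge 0$; all $x_n$ lie in the band $B$ generated by $u=\sum_n 2^{-n}x_n/(1+\norm{x_n})$, and $u$ is a weak unit of $B$; from $\norm{x_n\wedge u}\to 0$ choose a subsequence with $\norm{x_{n_k}\wedge u}\le 2^{-k}$, so that $x_{n_k}\wedge u\le w_k:=\sum_{j\ge k}x_{n_j}\wedge u$ where $w_k\downarrow 0$ (the tails are decreasing and norm null), hence $x_{n_k}\wedge u\goeso 0$; the weak-unit criterion for uo-convergence from \cite{GTX} then gives $x_{n_k}\goesuo 0$ in $B$, and since a band is a regular sublattice, $x_{n_k}\goesuo 0$ in $X$ by \cite[Theorem~3.2]{GTX}. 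If you wish to keep a measure-theoretic proof, you must add order continuity as a hypothesis --- which is precisely why Theorem~\ref{un-uo-wu} assumes it.
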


However, formally speaking, Proposition~\ref{un-uo-BL} is not a
generalization of Theorem~\ref{Fol} because $L_0(\mu)$ is not a Banach
lattice. Using the framework of this paper, we are now ready to
produce an appropriate extension of Theorem~\ref{Fol}.

\begin{theorem}\label{un-uo-wu}
  Let $X$ be an order continuous Banach lattice with a weak unit, such
  that $X$ is an order dense ideal in a vector lattice $Y$; let
  $(y_n)$ be a sequence in $Y$ such that $y_n\goesunX 0$. Then there is
  a subsequence $(y_{n_k})$ such that $y_{n_k}\goesuo 0$ in~$Y$.
\end{theorem}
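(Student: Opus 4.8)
The plan is to reduce to the Banach function space picture. Since $X$ is an order continuous Banach lattice with a weak unit, by \cite[Theorem~1.b.14]{Lindenstrauss:79} (or \cite[Section~4]{GTX}) $X$ may be represented as an order dense ideal of $L_1(\mu)$ for some probability measure $\mu$. It follows that $X^u=L_1(\mu)^u=L_0(\mu)$. By Remark~\ref{un-gen} (note $X$ is order complete, being order continuous, hence an ideal in $X^u$, and $Y^u=X^u$ since $X$ is order dense in $Y$), we may assume $X\subseteq Y\subseteq L_0(\mu)$, and the un-convergence on $Y$ is the restriction of the un-convergence on $L_0(\mu)$ induced by $X$. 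Moreover $Y$, being an ideal sandwiched between $X$ and $L_0(\mu)$, is a regular sublattice of $L_0(\mu)$, so that uo-convergence in $Y$ agrees with uo-convergence in $L_0(\mu)$ by Theorem~3.2 of \cite{GTX}.

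Next I would transfer the hypothesis $y_n\goesunX 0$ to convergence in measure. By Theorem~\ref{un-same} (or directly Theorem~\ref{bfs-measure}), the un-topology on $L_0(\mu)$ induced by the order continuous Banach function space $X$ coincides with the un-topology induced by $L_1(\mu)$, and by Theorem~\ref{bfs-measure} (with $\mu$ finite, so that ``local convergence in measure'' is just convergence in measure) this is exactly convergence in measure. Hence $y_n\to 0$ in measure in $L_0(\mu)$.

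Now apply the classical fact, Theorem~\ref{Fol}: since $y_n\to 0$ in measure and $\mu$ is finite, there is a subsequence $(y_{n_k})$ with $y_{n_k}\to 0$ a.e. Finally, for sequences in $L_0(\mu)$, a.e.\ convergence is precisely uo-convergence; since $Y$ is a regular sublattice of $L_0(\mu)$, $y_{n_k}\goesuo 0$ in $Y$ as well. This gives the desired subsequence.

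The only genuine subtlety is the reduction step: one must be careful that the representation of $X$ as an order dense ideal of $L_1(\mu)$ really allows one to identify $X^u$ with $L_0(\mu)$ and to view $Y$ inside $L_0(\mu)$ compatibly, so that all three notions in play (un-convergence induced by $X$, convergence in measure, and uo-convergence) match up on $Y$. Once Remark~\ref{un-gen} and the regularity of $Y$ in $L_0(\mu)$ are in hand, each individual transfer is one of the already-established theorems, and the rest is the classical measure-theoretic extraction of an a.e.-convergent subsequence.
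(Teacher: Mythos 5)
Your proposal is correct and follows essentially the same route as the paper's own proof: represent $X$ as an order dense ideal of $L_1(\mu)$, identify $X^u$ with $L_0(\mu)$, use Theorem~\ref{bfs-measure} to turn un-convergence into convergence in measure, extract an a.e.-convergent subsequence via Theorem~\ref{Fol}, and pass the uo-convergence down to $Y$ by regularity. One small repair: $Y$ need not be an ideal in $L_0(\mu)$ (it is only a sublattice of $X^u$ containing $X$), but it \emph{is} order dense in $X^u=L_0(\mu)$ since $X$ is, and order dense sublattices are regular by \cite[Theorem~1.23]{Aliprantis:03}, which is exactly the justification the paper uses.
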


\begin{proof}
  \emph{Special case:} $Y=X^u$. Represent $X$ as an order dense ideal
  in $L_1(\mu)$ for some finite measure~$\mu$. We may then identify
  $X^u$ with $L_0(\mu)$. By Theorem~\ref{bfs-measure}, $y_n\goesunX 0$
  yields $y_n\goesmu 0$ in $L_0(\mu)$. By Theorem~\ref{Fol}, there
  exists a subsequence $(y_{n_k})$ such that $y_{n_k}\goesae 0$ and,
  therefore, $y_{n_k}\goesuo 0$ in $L_0(\mu)$.

  \emph{General case.} Since $X$ is order dense in~$Y$, we have
  $X^u=Y^u$. Therefore, we may assume without loss of generality, that
  $Y$ is an order dense sublattice of~$X^u$. Since $y_n\goesunX 0$
  in~$Y$, it is trivial that $y_n\goesunX 0$ in~$X^u$. The special
  case yields $y_{n_k}\goesuo 0$ in $X^u$ for some subsequence
  $(y_{n_k})$. Since $Y$ is an order dense sublattice of~$X^u$, it is
  regular by \cite[Theorem~1.23]{Aliprantis:03}. It follows that
  $y_{n_k}\goesuo 0$ in~$Y$.
\end{proof}

Note that in Proposition~\ref{un-uo-BL}, $X$ need not have a weak
unit. Therefore, it is natural to ask whether a weak unit is really
needed in Theorem~\ref{un-uo-wu}. The following example shows that the
weak unit assumption in  Theorem~\ref{un-uo-wu} cannot be removed.

\begin{example}
  We are going to construct an order continuous Banach lattice $X$
  with no weak units and a vector lattice $Y$ such that $X$ is an
  order dense ideal in $Y$ (actually, $Y=X^u$) and a sequence $(y_n)$
  in $Y$ such that $y_n\goesunX 0$ in $Y$ and yet no subsequence of
  $(y_n)$ is uo-null.

  Let $\Gamma$ be an infinite set (we will choose a specific $\Gamma$
  later).  Let $X$ be the $\ell_1$-sum of infinitely many copies of
  $L_1[0,1]$ indexed by~$\Gamma$. That is, $X$ is the space of
  functions $x\colon\Gamma\to L_1[0,1]$ such that
  \begin{math}
    \norm{x}:=\sum_{\gamma\in\Gamma}\norm{x(\gamma)}_{L_1[0,1]}
  \end{math}
  is finite. We may also view $x$ as a function on the union of
  $\abs{\Gamma}$ many copies of $[0,1]$ or, equivalently, on
  $[0,1]\times\Gamma$. We write $x^\gamma$ instead of $x(\gamma)$ and
  call it a component of~$x$. It is easy to see that $X$ is an
  AL-space; in particular, it is order continuous. It is also easy to
  see that each $x\in X$ has at most countably many non-zero
  components. Let $Z$ be the subset of $X$ consisting of those $x$
  which have finitely many non-zero components. It can be easily
  verified that $Z$ is a norm dense ideal in~$X$.

  Let $Y$ be the direct sum of infinitely many copies of $L_0[0,1]$
  indexed by~$\Gamma$. That is, $Y=\bigl(L_0[0,1]\bigr)^\Gamma$, the space of
  all functions from $\Gamma$ to $L_0[0,1]$. Again, we write
  $y^\gamma$ instead of $y(\gamma)$ and may view $y$ as a real-valued
  function on $[0,1]\times\Gamma$. It is easy to see that $Y$ is a
  vector lattice under component-wise lattice operations, and $X$ is
  an order dense ideal in~$Y$. Furthermore, $Y=X^u$. We equip $Y$ with
  the un-topology induced by~$X$.

  Let $(y_n)$ be an arbitrary sequence in~$Y_+$. We claim that in
  order for it to be un-null, it suffices that the sequence
  $y_n^\gamma$ converges to zero in measure in for
  every~$\gamma$. Indeed, the latter implies that
  $\norm{y_n^\gamma\wedge u}_{L_1[0,1]}\to 0$ for every positive
  $u\in L_1[0,1]$. It follows easily that $\norm{y_n\wedge x}_X\to 0$
  for every $x\in Z_+$. Proposition~\ref{dense} now yields that
  $y_n\goesunX 0$.

  Furthermore, it can be easily verified that if a sequence $(y_n)$ in
  $Y$ is uo-null then for every $\gamma\in\Gamma$ one has
  $y_n^\gamma\goesuo 0$ in $L_0[0,1]$, hence $y_n^\gamma\goesae 0$.

  We now specify $\Gamma$: let it be the set of all strictly
  increasing sequences of natural numbers. Fix a sequence $(f_k)$ in
  $L_0[0,1]$ such that $f_k$ converges to zero in measure but not
  a.e. For each~$n$, define $y_n$ in $Y$ as follows: for each
  $\gamma\in\Gamma$, let $\gamma=(n_k)$ and put $y_n^\gamma=f_k$ if
  $n=n_k$ and $y_n^\gamma=0$ if $n$ is not in~$\gamma$. It follows
  that $(y_{n_k}^\gamma)$ as a sequence in $k$ is exactly $(f_k)$,
  while the rest of the terms of $(y_n^\gamma)$ are zeros. Therefore,
  the sequence $(y_n^\gamma)$ converges in measure to zero for
  every~$\gamma$, hence $y_n\goesun 0$. On the other hand, $(y_n)$ has
  no uo-null subsequences. Indeed, suppose that $y_{n_k}\goesuo 0$ for
  some subsequence $(n_k)=\gamma$. Then $y^\gamma_{n_k}\goesae 0$ in
  $L_0[0,1]$, so that $f_k\goesae 0$; a contradiction.
\end{example}

\subsection*{Acknowledgement and further remarks.}
We would like to thank Niushan Gao and Mitchell Taylor for valuable
discussions. After this project was completed, M.~Taylor has
generalized some of the results to the setting of locally solid vector
lattices; see~\cite{Taylor}.


\begin{thebibliography}{WWWW}

\bibitem[AA02]{Abramovich:02}
Y.~Abramovich and C.D.~Aliprantis,
\emph{An invitation to operator theory},
Vol. 50. Providence, RI: American Mathematical Society, 2002.

\bibitem[AB03]{Aliprantis:03}
C.D.~Aliprantis and O.~Burkinshaw,
\emph{Locally solid Riesz spaces with applications to economics},
2nd ed., AMS, Providence, RI, 2003.

\bibitem[AB06]{Aliprantis:06}
C.D.~Aliprantis and O.~Burkinshaw,
\emph{Positive operators}, 2nd edition, Springer 2006.




    
\bibitem[DOT17]{DOT}
Y.~Deng, M.~O'Brien, and V.G.~Troitsky, 
Unbounded norm convergence in Banach lattices,
\emph{Positivity}, 21(3), (2017) 963--974.


\bibitem[Fol99]{Folland:99}
G.B.~Folland,
\emph{Real analysis: Modern techniques and their applications},
2nd edition, Pure and Applied Mathematics,
John Wiley \& Sons, Inc., New York, 1999.

\bibitem[GTX17]{GTX}
N.~Gao, V.G.~Troitsky, and F.~Xanthos,
Uo-convergence and its applications to Ces\`aro means in Banach lattices,
\emph{Israel J.\ Math.}, 220 (2017), 649--689.

\bibitem[KMT17]{KMT} 
M. Kandi\'c, M. Marabeh, V.G. Troitsky, Unbounded norm topology in
Banach lattices, \emph{J.\ Math.\ Anal.\ Appl.}, 451 (2017), no. 1,
259–-279.

\bibitem[KN76]{Kelley:76}
J.L.~Kelley and I.~Namioka.
\emph{Linear topological spaces}.
Springer-Verlag, New York, 1976.

\bibitem[LT79]{Lindenstrauss:79}
J.~Lindenstrauss and L.~Tzafriri,
\emph{Classical {B}anach spaces. {I}{I}}, Springer-Verlag, Berlin,
  1979.

\bibitem[MN91]{Meyer-Nieberg:91}
P.~Meyer-Nieberg,
\emph{Banach lattices},
Springer-Verlag, Berlin, 1991.

\bibitem[Run05]{Runde:05}
V. Runde, \emph{A taste of topology}, Springer, Berlin, 2005.

\bibitem[Sch74]{Schaefer:74}
H.H.~Schaefer,
\emph{Banach lattices and positive operators},
Springer-Verlag, Berlin, 1974.

\bibitem[Sol66]{Solovev:66}
  V.A.~Solov'ev,
  Extension of monotone norm from a normed structure to its Dedekind augmentation
(Russian) \emph{Sibirsk. Mat. \v Z.}, 7(6), 1966, 1360-–1369. English
translation: \emph{Siberian Math.\ J.} 7(6), 1966, 1067-–1073.

\bibitem[Tay]{Taylor}
  M.~Taylor,
  Unbounded topologies and uo-convergence in locally solid vector
  lattices,
  preprint, arXiv:1706.01575 [math.FA]
  
\bibitem[Tro04]{Troitsky:04}
V.G.~Troitsky,
Measures of non-compactness of operators on Banach lattices,
\emph{Positivity}, 8(2), 2004, 165--178.

\bibitem[Vul67]{Vulikh:67}
  B.Z.~Vulikh,
  \emph{Introduction to the theory of partially ordered spaces.}
  Noordhoff Scientific Publications, Ltd., Groningen  1967.
		
\bibitem[Wnuk99]{Wnuk:99}
W.~Wnuk,
\emph{Banach lattices with order continuous norms},
Polish scientific publishers PWN, Warszawa, 1999.

\bibitem[LZ71]{Luxemburg:71}
W.A.J.~Luxemburg and A.C.~Zaanen,
\emph{Riesz spaces. Vol. I.} 
North-Holland Publishing Co., Amsterdam-London; American Elsevier
Publishing Co., New York, 1971.

\end{thebibliography}
\end{document}